\newcounter{figurecounter}
\newcounter{figurecounter1}
\newcounter{figurecounter2}
\newtheorem{theorem}{Theorem}
\newtheorem{mtheorem}{Main Theorem}
\newtheorem{corollary}[theorem]{Corollary}
\newtheorem{definition}{Definition}
\newtheorem{example}{Example}
\newtheorem{bexample}{Binary Example}
\newtheorem{lemma}{Lemma}
\newtheorem{proposition}{Proposition}
\newenvironment{proof}[1][Proof]{\textbf{#1.} }{\ \rule{0.5em}{0.5em}}
\newcommand{\calP}{{\cal P}}
\newcommand{\calQ}{{\cal Q}}
\newcommand{\ep}{\varepsilon}
\newcommand{\dR}{{{\bf R}}}
\newcommand{\E}{{{\bf E}}}
\newcommand{\prob}{{{\bf P}}}
\newcommand{\conv}{{{\rm conv}}}
\renewcommand{\baselinestretch}{1.2}
\begin{document}
\renewcommand{\baselinestretch}{1.2}

\title{Strategic Information Exchange}
\author{Dinah Rosenberg\thanks{Economics and Decision Sciences Department, HEC Paris, and GREG-HEC; . e-mail:
rosenberg@hec.fr}\ , Eilon Solan\thanks{The School of Mathematical
Sciences, Tel Aviv University, Tel Aviv 69978, Israel. e-mail:
eilons@post.tau.ac.il} \ and Nicolas Vieille\thanks{Economics and
Decision Sciences Department, HEC Paris, and GREG-HEC. e-mail:
vieille@hec.fr. }}

\maketitle
\date{}

\begin{abstract}
We study a class of two-player repeated games with incomplete
information and informational externalities. In these games, two
states are chosen at the outset, and players get private
information on the pair, before engaging in repeated play. The
payoff of each player only depends on his `own' state and on his
own action. We study to what extent, and how, information can be
exchanged in equilibrium. We prove that provided the private
information of each player is valuable for the other player, the
set of sequential equilibrium payoffs converges to the set of
feasible and individually rational payoffs as players become
patient.
\end{abstract}

Whether and how to acquire information is a question faced by most
decision makers. In statistical decision problems, the decision
maker tries to learn the value of an unknown parameter, and he can
sample from an exogenous population at a fixed cost per draw. In
other contexts, information is held by strategic agents. In
signalling games for instance, a player holding payoff-relevant
private information tries to influence the action choice of an
uninformed party. There, the rationale for disclosing/hiding
information is that the uninformed party's action affects the
payoff of the informed player.  We here study a class of repeated
games with private information, in which there is no such direct
strategic interaction: as in setups of social learning, or of
strategic experimentation, payoffs do not depend upon other
players' actions. However, players hold private information that
has value to other players. Our goal is to understand to what
extent information can be exchanged at equilibrium along the play,
assuming communication is costly. This assumption of pure informational externalities plays
a dual role. Obviously, it simplifies the analysis of the model
and allows to study the exchange of information in isolation from
other strategic considerations. But it leads to a game in which we
might \emph{least} expect exchange of information, and any
positive result in this setup might potentially open the way for
the analysis of other setups.

As an illustration, consider the following game. There are two
biased coins, $C_1$ and $C_2$ (say, with parameter $\frac23$),
which are tossed independently \emph{once} at the outset of the
game. Each of two players, $i=1,2$, has to repeatedly guess the
outcome of coin $C_i$. A correct guess yields a payoff of one,
while an incorrect one yields zero, and successive payoffs are
discounted. If past payoffs are not observed, there is no role for
direct inference, and it is natural to expect that player $i$ will
repeatedly `guess'  the most likely outcome, for an expected
payoff of $\frac23$. Assume that, once coins are drawn, each
player $i$ gets to observe the outcome of coin $C_j$, where $j\neq
i$, but that cheap talk is excluded. This private information has
no `direct' value, but it might have a strategic value, because it
is valuable to the other player. In this game, is there an
equilibrium payoff that improves upon $(\frac23, \frac23)$ ? While
stylized, this game is similar to the situation faced by
executives of two different firms, who hold private information on
their own firm. Trading the stock of one's own firm is illegal, at
times where private information is most valuable, and the
executives may be tempted to implement some implicit collusive
scheme of information exchange through time.

We start with few simple observations. Since cheap talk is assumed
away, exchange of information is to take place through actions, by
`encoding'  privately held information into actions, and by
conditioning the action choice of player $i$ on the outcome of
coin $C_j$. Plainly, there is no equilibrium in which each player
$i$ \emph{fully} `discloses' the outcome of $C_j$ at stage 1.
Indeed, once informed of the outcome of $C_2$, player 2 can make
the correct guess in all subsequent periods, and has no incentive
whatsoever to incur the cost of disclosing information to player
1, including in stage 1. But then, player 1 would not be willing
to play the myopically suboptimal action in stage 1. More
generally, private information is here an asset to be exchanged
for private information, at a cost. On the one hand, a player
cannot disclose information without assigning positive probability
to his myopically suboptimal action, and thereby incurring a
cost.\footnote{Unless if indifferent, but this possibility will
play no role.} On the other hand, a player is not willing to play
a myopically suboptimal action, unless he expects  to be rewarded
with valuable information in return: no player wants to be the
last one to disclose information. Not surprisingly, when the
horizon is finite we prove that $(\frac23,\frac23)$ is the unique
equilibrium payoff. Having an infinite horizon raises the
possibility  of a gradual, open-ended exchange of information.
Since however, the total `amount' of information to be exchanged
is bounded, the feasibility of such a process is not ensured.

Our model is a generalization of the stylized example. Two
`states', $\textbf{s}$ and $\textbf{t}$, are drawn independently,
and players get private information on both $\textbf{s}$ and
$\textbf{t}$.%
\footnote{That is, each player receives information both on his
state and on the other player's state.}
 Next, the players repeatedly choose actions, which are
publicly disclosed. The crucial assumption we make is that a
player's payoff only depends on `his' state, and on his own
action.

We prove that, provided that the information held by each player
is valuable to the other player, the limit set (as $\delta\to 1$)
of sequential equilibrium payoffs coincides with the set of all
feasible payoffs, that are at least equal to the initial, myopic
optimal payoffs. In the simple example discussed above, this limit
set is thus equal to the set $[\frac23,1]\times [\frac23,1]$. Not
only information can be shared, but the rate of information
exchange can be arbitrarily high relative to the discount rate.
Our equilibria share the following features. Players start by
reporting truthfully whatever information they received on their own
state. This leads to a continuation game in which no player holds
private information on his own state. As a result, each player is
able to compute how costly it is for the other player to play his
suboptimal action, and is therefore able to adjust accordingly the
amount of information he discloses as a `reward'. Players next
exchange information in an open-ended manner. The analysis
presents two main and mostly independent difficulties. One is to
design open-ended equilibrium processes, according to which
information is exchanged. In our construction, the bulk of
information exchange takes place early in the game. Later
information disclosure only serves as a means to compensate for
previously incurred costs. The second  consists in adjusting
this continuation play so as to provide the incentives for
truthful reporting of one's information on one's own state.

Our motivation stems from repeated games with incomplete
information. The literature on such games started with Aumann and
Maschler (1966, 1995), and was extensively developed under the
assumption of no discounting, see chapters 5 and 6 in Aumann and
Hart (1992). When there is no discounting, communication through
actions becomes costless, and our model, trivial. Besides the
literature on reputation models, see Mailath and Samuelson (2006)
for a survey, there is only limited work dealing with discounted
repeated games with incomplete information.
Recent contributions are Cripps and Thomas (2003), Peski (2008) and Wiseman (2005).%
\footnote{%
An older one is Mayberry (1967).}
Both Cripps and Thomas (2003), and Peski (2008) look at games with one-sided information,
in which each of the two players knows his own payoff function, and one
of the two is unsure of the payoff function of the other player.
Cripps and Thomas (2003) prove that a Folk Theorem type
of result holds in the limit where the prior belief converges to the case of complete information.
Peski (2008)
essentially shows that all equilibria are payoff-equivalent to equilibria that involve finitely many rounds
of information revelation. Wiseman (2005) looks at situations of \emph{common}
uncertainty. Players share the same information on the underlying state of
nature, and refine this information by observing actual choices and payoffs.

Starting with Crawford and Sobel (1982), the huge literature on
strategic information transmission and on cheap-talk games addresses issues
related to ours. The paper that is closest to our work is Aumann and Hart (2003). There, prior to
playing a game once, two players, one of which is informed of the true game
to be played, exchange messages during countably many periods. Aumann and
Hart (2003) characterize the set of equilibrium payoffs. Following an example of
Forges (1990), they show that allowing for an unbounded communication length
may increase the set of equilibrium payoffs. There are however significant
differences with our setup.
On the one hand, this literature allows the game to exhibit
informational and strategic interaction as well. On the other hand,
information is one-sided, and communication is costless.

Finally, the pattern of information disclosure in our setup is reminiscent
of the pattern of contributions in dynamic models of public good
contributions, see Admati and Perry (1991), Marx and Matthews (2000), or in the
dynamic resolution of the hold-up problem, see Che and Sakovics (2004). More
generally, and as Compte and Jehiel (2004) argue, the existence of a
history-dependent outside option forces equilibrium concessions to be
gradual in many bargaining situations, just as information disclosure has to be
gradual and open-ended here. There are however differences between the results on the two models.
First, in dynamic models of public good contributions
the evolution of contributions follows a deterministic trend,
while in games with incomplete
information beliefs follow a martingale. A more significant difference is the following.
In the former models, there is a one-to-one relation between the contributed amount and
the cost incurred when contributing: the more a player contributes, the
higher his cost by doing so. Here, the cost of disclosing information is
\emph{independent} of the \emph{amount} of information that is disclosed.
The reason is that the cost of disclosure is incurred when playing a
(myopically) suboptimal action, while the amount of information is a
function of how revealing such an action is. The more precise the belief,
the higher the cost of disclosing information.

\bigskip

The paper is organized as follows. Section \ref{sec model}
contains the model and a statement of our main results. Section
\ref{section example} presents the main ideas of the proof through
a version of the example discussed above (allowing for private
signals). Section \ref{sec comments} is devoted to concluding
comments. Proofs are provided in the Appendix.

\section{Model and Main Result}
\label{sec model}

\subsection{The Game}

We study a class of two-player repeated games with incomplete information. At the outset
of the game, a state of the world is realized, and the two players receive private signals,
 $\textbf{l}\in L$ and $\textbf{m}\in M$ respectively. At each stage $n\geq 1$, players choose actions from
the action sets $A$ and $B$. Actions, and only actions, are then publicly disclosed.

All sets are finite. Players share a common discount factor $\delta <1$.

We make the following assumption:
\begin{description}
\item[A.1] The set of states of the world is a product set, $S\times T$, with elements denoted by
$(s,t)$. Player $i$'s payoff depends only on his own action and on the $i$-th component of the state.
That is, the payoff function of player 1 is a function $u:S\times A\to \dR$, while player 2's payoff
is given by a function $v:T\times B\to \dR$.

\item[A.2] Signal sets are also product sets, $L=L_S\times L_T$
and $M=M_S\times M_T$, with elements denoted by $l=(l_S,l_T)$ and
$(m_S,m_T)$. The random triples $(\textbf{s},
\textbf{l}_S,\textbf{m}_S)$ and
$(\textbf{t},\textbf{l}_T,\textbf{m}_T)$ are drawn independently
of each other according to the distributions  $p\in \Delta(S\times
L_S\times M_S)$ and $q\in \Delta(T\times L_T\times M_T)$
respectively.%
\footnote{Here, the state of the world is
$(\textbf{s},\textbf{t})$, the signal to player 1 is
$(\textbf{l}_S,\textbf{l}_T)$, and the signal to player 2 is
$(\textbf{m}_S,\textbf{m}_T)$. We will refer to $\textbf{s}$
(resp., to $\textbf{t}$) as to player 1's state (resp., player 2's
state).}
\end{description}

Assumption \textbf{A.1} ensures that the game is one of pure
informational externalities. Player 1 cares about player 2's
behavior only to the extent that player 2's behavior conveys
information about $\textbf{s}$.

The independence assumption \textbf{A.2} is often made in games
with two-sided incomplete information, see, e.g., Zamir
(1992). Not only does it imply that the two states $\textbf{s}$
and $\textbf{t}$ are independent, but also that the two private
signals, say of player 1, $\textbf{l}_S$ and $\textbf{l}_T$, are
independent. The first component  $\textbf{l}_S$ of player 1's
private signal should be thought of as the information received by
player 1 on his own state $\textbf{s}$, while the second component
$\textbf{l}_T$ is player 1's information on player 2's state,
$\textbf{t}$.\footnote{The subscripts in $l_S$, $l_T$ serve a
mnemonic purpose. We use boldface letters to denote random
variables, when we fear confusion might be at stake.} Besides
allowing for tractability, assumption \textbf{A.2} implies that
behaving myopically is an equilibrium. That is, assume that player
1 repeatedly plays an action $a_\star$ that maximizes the
expectation of $u(\textbf{s},a)$, given  $\textbf{l}_S$. Then, by
\textbf{A.2}, the belief held by player 2 over his own state does
not change along the play, and it is a best reply for player 2 to
repeatedly play an action $b_\star$ that maximizes the expectation
of $v(\textbf{t},b)$, given $\textbf{m}_T$. And vice-versa. If
instead the two states $\textbf{s}$ and $\textbf{t}$ were
correlated, then the choice of $a_\star$ might be informative
about $\textbf{t}$, and may lead to a change in player 2's action,
which would in turn be informative about $\textbf{s}$. But then,
to manipulate this informational feedback, player 1 might be
tempted to `mis-represent' his myopically action $a_\star$. Such
an example is provided in Section \ref{section correlated}.
Assumption \textbf{A.2} assumes away these effects.

In a sense, assumptions \textbf{A.1} and \textbf{A.2} imply that
the only motive for disclosing information on the other player's
state is to get back information in exchange: no information will
ever be exchanged, unless out of purely \emph{strategic} reasons.
We stress that, although the triples $(\textbf{s},
\textbf{l}_S,\textbf{m}_S)$ and
$(\textbf{t},\textbf{l}_T,\textbf{m}_T)$ are independent, we make
no assumption on the distributions $p$ and $q$. Thus, the two
signals relative to a \emph{given} state may be correlated in an
arbitrary way among themselves, or with the state itself.

\bigskip
The main question that we ask is whether and to what extent
valuable information can be exchanged at equilibrium, and how to
organize the exchange of information along the play, while meeting
equilibrium requirements. Our main result consists of a
characterization of the limit set of sequential equilibrium
payoffs, as players become patient.

\bigskip
Strategies will be denoted by $\sigma$ and $\tau$ for players 1
and 2 respectively. A behavior strategy of player $i$ maps his
private information
 and the public history of past moves, into a mixed action. Accordingly, behavior strategies are maps $\sigma
:L\times H\to \Delta(A)$ and $\tau: M\times H\to \Delta(B)$, where
$H=\displaystyle\cup_{n\geq 0}(A\times B)^n$ is the set of finite
sequences of moves. A strategy pair $(\sigma,\tau)$ induces a
probability distribution over the set of (infinite) plays.
Expectations under this distribution are denoted by
$\E_{p,q,\sigma,\tau}$. Thus, the expected discounted payoff of
player 1 is given by
$\gamma_\delta^1(p,q,\sigma,\tau)=\E_{p,q,\sigma,\tau}\left[(1-\delta)\sum_{n=1}^\infty\delta^{n-1}
u(\textbf{s},\textbf{a}_n)\right]$. We denote by $\textbf{p}_n\in
\Delta(S\times M_S)$ the belief held by player 1 at stage $n$: it
is the conditional distribution of the pair
$(\textbf{s},\textbf{m}_S)$, given $\textbf{l}$, and the (public)
sequence of previous moves. Note that $\textbf{p}_1$ depends only
on $\textbf{l}$, because there is no history of moves prior to
stage 1. However, the computation of $\textbf{p}_n$ involves
player 2's strategy for $n>1$.
We denote by $\textbf{q}_n \in \Delta(T \times L_T)$ the belief of player 2 at stage $n$
on his state of nature and on the signal that player 1 received on this state of nature.

\subsection{Preliminaries}

Loosely put, our main result is the following. Provided that each
player holds information that is valuable to the other player, and
that players are patient, any feasible and individually rational
payoff is a sequential equilibrium payoff. Before we state
formally this result, we list some preliminary remarks, and give
few definitions.

\subsubsection{Myopically optimal payoffs}

Given a probability distribution $\pi \in \Delta (S)$ and an action $a\in A$, we write $u(\pi
,a)$ for the expected payoff of player 1 when holding the belief $\pi $ and
playing action $a$:
\[ u(\pi ,a)={\mathbf{E}}_{\pi }[u(\cdot ,a)]=\sum_{s\in
S}\pi (s)u(s,a). \]
We will often abuse notation and write $u(\pi ,a)$ whenever
 $\pi $ is a distribution over a product space of the form $S\times Q$, for some finite set $Q$. In that
case, $u(\pi ,a)=\displaystyle\sum_{s\in S}\pi (\{s\}\times
Q)u(s,a)$.
Given a distribution $\pi \in \Delta(S)$, the \emph{myopically optimal payoff} when holding the belief $\pi $ is
\begin{equation}
u_{\star }(\pi ):=\max_{a\in A}u(\pi ,a).  \label{ustar}
\end{equation}%
 For a given $a\in A$, the map $\pi \mapsto u(\pi ,a)$ is affine. As a
supremum of finitely many affine functions, the map $u_{\star }$ is
convex and piecewise linear.
An action $a\in A$ is \emph{optimal} at $\pi$ if it
 achieves the maximum in (\ref{ustar}).
 \bigskip

 Recall that $\textbf{p}_1\in \Delta(S\times M_S)$ is the (\emph{interim}) belief of player 1 prior to stage 1. Thus, the probability
 $\textbf{p}_1(s,m_S)$ assigned to $(s,m_S)$ is equal to $p(s,m_S|\textbf{l}_S)$. A \emph{myopic} strategy of player 1 is a strategy
 $\sigma_m$ that repeats the same, optimal action at $\textbf{p}_1$. The (\emph{ex ante}) payoff induced by $\sigma_m$ does not depend
 on player 2's strategy, and is equal to
\begin{equation*}
u_\star:={\mathbf{E}}_p\left[u_\star(\mathbf{p}_1)\right]=\sum_{l_S\in L_S}p(l_S)u_\star(p(\cdot|l_S)).
\end{equation*}
If player 2's strategy does not depend on $\textbf{m}_S$, the belief of player 1 does not change
along the play: $\textbf{p}_n=\textbf{p}_1$ for each $n\geq 1$. In such a case, the expected payoff of player 1 does not exceed $u_\star$.
Thus, $u_\star$ is the minmax value for player 1 in the repeated game. For similar reasons,
\[v_\star:={\mathbf{E}}_p\left[v_\star(\mathbf{q}_1)\right]=\sum_{m_T\in M_T}q(m_T)v_\star(q(\cdot|m_T))\]
is the minmax value for player 2.

\bigskip

Player 1's payoff is highest when he knows all he may possibly know about $\textbf{s}$, given the rules of the
game,\footnote{This can be formally deduced from Blackwell Theorem \cite{Bl}.} that is, when player 2's signal $\textbf{m}_S$ is made public. Player 1's
belief is then denoted by
 $\tilde{\textbf{p}}\in \Delta(S)$; thus,
$\tilde{\textbf{p}}(s)=p(s|\textbf{l}_S,\textbf{m}_S)$, for each state $s\in S$. Conditional on both signals $\textbf{l}_S$ and $\textbf{m}_S$, player 1's optimal payoff is $u_\star(\tilde{\textbf{p}})$. Therefore, player 1's \emph{ex ante} expected payoff does not exceed
\begin{equation*}
u_{\star \star }:={\mathbf{E}}_p[u_{\star }(\tilde{\mathbf{p}})]=\sum_{l_S\in L_S,m_S\in M_S}p(l_S,m_S)u_\star(p(\cdot|l_S,m_S)).
\end{equation*}%
Since $u_\star$ is convex, one has $u_{\star\star}\geq u_\star$. This reflects the fact that the marginal value of the information
held by player 2 is nonnegative.

Conversely, any payoff in $[u_\star,u_{\star\star})$ is a feasible payoff for player 1, provided players are patient. Therefore, the (limit)
set of feasible and individually rational payoffs for player 1 is the interval $[u_\star,u_{\star\star}]$.

Similarly, we define $\tilde{\textbf{q}}=q(\cdot|\textbf{l}_T,\textbf{m}_T)$ and
 $v_{\star \star }:={\mathbf{E}}_{q}[v_{\star }(\tilde{\mathbf{q}})]$. As for player 1, the limit set of feasible
 and individually rational payoffs for player 2 is the interval $[v_\star,v_{\star\star}]$.

\bigskip

The example discussed in the introduction will serve as a leading
example. Here, all four sets $S$, $T$, $A$ and $B$ are equal to
$\{0,1\}$. Payoffs are given by $u(s,a)=1$ if $s=a$, and
$u(s,a)=0$ if $s\neq a$ (resp., $v(t,b)=1$ if $t=b$, and
$v(t,b)=0$ if $t\neq b$). We will refer to this setup as the
\textbf{Binary Example}, and we will use it repeatedly, with
various information structures. Note that the myopically optimal
action is $a=1$ if and only if the belief assigned by player 1 to
$s=1$ is at least $1/2$.

\begin{bexample}\label{example-binary}
\label{example} Assume here that $L_S$ is a singleton, while
player 2 observes $\textbf{s}$ (that is, $M_S=S$, and $p(s,m_S)=0$
if $s\neq m_S$). The belief of player 1 can be identified with the
probability $\pi$ assigned to state 1, and
$u_\star(\pi)=\max\{\pi,1-\pi\}$. On the other hand, since player
2 observes $\textbf{s}$, $\tilde{\textbf{p}}$ is either 0 or 1,
and $u_{\star\star}=1$.
\end{bexample}


\subsubsection{Valuable information}

A player will not be willing to play a myopically suboptimal action unless he expects to receive information
in return, the marginal value of which offsets the cost incurred when playing the suboptimal action. In particular, a \emph{necessary}
condition for improving upon $(u_\star,v_\star)$ is that \emph{each} player holds information that is \emph{valuable} to the other player.

We stress that it is not enough that each player holds information on the other player's state. Indeed, if, \emph{e.g.}, the two signals $\textbf{l}_s$ and $\textbf{m}_s$ coincide $p$-a.s., player 1 already knows all player 2 knows relative to $\textbf{s}$. Nor is it enough that players have \emph{private} information, as the next example shows.

\begin{bexample}
\label{example2} Assume that $p(\textbf{s}=1)=\frac34$ and that player 2 receives a binary signal $\textbf{m}_S$ which (conditional on $\textbf{s}$) is correct with probability $\frac23$. Assume moreover that $L_{S}$ is a
singleton, so that player 1 has no information and $\textbf{p}_1=p=\frac34$, and $u_\star=\frac34$.
By Bayes' rule, the posterior probability $\tilde{\textbf{p}}$ assigned to state 1
is equal to $\tilde{%
\mathbf{p}}=\frac{6}{7}$ if $\mathbf{m}_{S}=1$, and is equal
to $\frac{3}{5}$ if $\mathbf{m}_{S}=0$. In either case, the optimal action at $\tilde{\textbf{p}}$ is
$a=1$, and, therefore, knowing the information of player 2 will not change the optimal behavior of player 1:  the information held by player 2 is valueless. Note that
$u_{\star\star}=\E[u (\tilde{\textbf{p}},1)]=u(p,1)=u_\star$, where the middle equality holds
by the law of iterated expectations.
\end{bexample}

In the previous example, the information held by player 2 is valueless
because it does not affect player 1's optimal action. Indeed, the precision of player 2's signal is lower than the prior evidence that the state is 1,
hence player 2's signal cannot provide decisive evidence against state 1.

This observation motivates the definition below.
\begin{definition}
\label{definition valuable} The information of player 2 is \emph{valuable}
for player 1, if
\begin{equation}\label{i-valuable}
{\mathbf{E}}\left[u_{\star }\left(\tilde{\mathbf{p}}\right)|\mathbf{l}_{S}\right]>u_{\star }(%
\mathbf{p}_{1}),\mbox{ with } p-\mbox{probability }1.
\end{equation}
\end{definition}

Similarly, the information of player 1 is valuable for player 2 if ${\mathbf{%
E}}[v_{\star }(\tilde{\mathbf{q}})|\mathbf{m}_{T}]>v_{\star }(\mathbf{q}%
_{1}),$ with $q$-probability 1.

Condition (\ref{i-valuable}) is an interim requirement: it is equivalent to requiring that, after learning $\mathbf{l}%
_{S}$, player 1 assigns positive probability to the event that his
 optimal action would change, if player
2's signal $\textbf{m}_S$ were made public.

This condition implies that $u_{\star\star}>u_\star$, but it is
not implied by this inequality. Indeed, the latter condition is an
\emph{ex ante} requirement, that amounts to requiring that ${\mathbf{E}}\left[u_{\star }\left(\tilde{\mathbf{p}}\right)|\mathbf{l}_{S}\right]>u_{\star }(%
\mathbf{p}_{1})$ holds with positive $p$-probability.

Definition \ref{definition valuable} does not provide a measure of information value, but only a criterium
for deciding whether the information held by a player has positive value or not.
Note that this criterium involves player 1's utility function $u$, and is therefore game-dependent.
\addtocounter{figurecounter}{1}


\subsection{Main result}

Our main result is the following.

\begin{mtheorem}
\label{theorem1}
Assume that the information of each player is valuable to the other player.
Then, as $\delta \rightarrow 1$, the set of sequential equilibrium payoffs
converges to the set $[u_{\star },u_{\star \star }]\times \lbrack v_{\star
},v_{\star \star }]$.
\end{mtheorem}

The proof of Theorem \ref{theorem1} is provided in the Appendix.
Formally we prove that given any compact set $%
W\subset \left( u_{\star },u_{\star \star }\right) \times \left( v_{\star
},v_{\star \star }\right) $, there is $\delta _{0}<1$ such that each vector in
$W$ is a sequential equilibrium payoff of the $\delta $-discounted game, as
soon as $\delta \geq \delta _{0}$.

Note that player 1's expected payoff does not exceed $(1-\delta)u_\star +\delta u_{\star\star}$,
because player 1's action in stage 1 is based only upon $\textbf{l}$.
It will follow from the proof of Theorem \ref{theorem1} that
equilibrium payoffs as high as $u_{\star\star}-(1-\delta)c$
can be implemented in the $\delta$-discounted game,
for some constant $c$.
These two estimates give a rate of convergence of the set of equilibrium payoffs in the $\delta$-discounted game.

The conclusion of the theorem is still valid when players hold different
discount factors, which converge to 1.

\bigskip

A few comments are in order.
The theorem gives a sharp answer to the question we posed.
There exist equilibria, in which almost all information can be exchanged, with a negligible delay,
in spite of the fact that information exchange must be gradual and open-ended. This stands in sharp contrast
to conclusions obtained in dynamic models of public good contribution,
see Compte and Jehiel (2004). The driving force that explains this contrast is the following.
Here, the \emph{cost} of disclosing information is the opportunity cost of playing a suboptimal action,
while the \emph{amount} of information depends on the evolution of beliefs and,
therefore, on the extent to which actions are correlated with private information.
As a result, cost and amount are disentangled. By contrast, in a public good model,
there is a one-to-one relation between the cost of a given monetary contribution, and the amount contributed.
\bigskip

When the information, say, of player 2, has a positive \emph{ex
ante} value ($u_{\star\star}>u_\star$), but is not valuable
according to Definition \ref{definition valuable}, the conclusion
of the theorem does not hold. Indeed, whenever the signal
$\textbf{l}_S$ received by player 1 fails to satisfy inequality
(\ref{i-valuable}), player 1 will infer (at the interim stage)
that the information held by player 2 has no value. Then, player 1
will not be willing to disclose any information to player 2,
although the information he holds may be valuable to player 2. In
Section \ref{sec comments}, we extend the characterization of the
theorem to cover such  cases.
\bigskip

The extension to an arbitrary number of players is outside of the
scope of this paper. With more than two players, there exist cases
where some player $i$ holds no private information, yet receives
information in equilibrium. The basic intuition is that player $i$
may be `rewarded' by some other player $j$, for information that
has been disclosed to $j$ by a third player $k$.

\section{The Binary Example}\label{section example}

We here explain the main ideas of the equilibrium construction
within the binary setup. For simplicity, we assume here that as in
the example in the Introduction, each player knows exactly the
other player's state.\footnote{That is, $\textbf{l}_T=\textbf{t}$
with $q$-probability 1, and $\textbf{m}_S=\textbf{s}$ with
$p$-probability 1.}

All equilibria share the feature that each player starts by
reporting truthfully his private signal relative on his \emph{own}
state, $\textbf{l}_S$ and $\textbf{m}_T$ respectively. The
rationale for this report is the following. In the continuation
game, no player then holds \emph{private} information on his
\emph{own} state. Consequently, player $i$ will always know the
posterior belief of player $j$ on $j$'s state. This in turn allows
player $i$ to compute the perceived cost incurred by player $j$
when playing either action, and to adjust accordingly the amount
of information he discloses. Of course, adequate incentives will
have to be provided to ensure truthful reporting.

We first deal with games in which players get no information
on their own state, which  we call \emph{self-ignorant games}. We
next explain how to provide incentives for a truthful report in
the general case. The discussion of how to combine these two
logically independent steps is relegated to the Appendix.

\subsection{The self-ignorant case}

In this section, we analyze the game discussed in the
introduction. The states $\textbf{s}$ and $\textbf{t}$ are drawn
according to $p \in \Delta(S)$ and $q \in \Delta(T)$. Player 1 is
told $\textbf{t}$, and player 2 is told $\textbf{s}$. Hence, $p$
(and $q$) may be viewed as a distribution over $S=\{0,1\}$. We
identify any distribution over $S$ (resp., over $T$) with the
probability assigned to state $s=1$ (resp., to state $t=1$). For
concreteness, we assume that $p$ and $q$ are such that $p,
q>\frac12$.

\subsubsection{A first equilibrium profile}\label{sec periodic}


We here describe one specific equilibrium profile that will serve
as a building block for the general construction. In this profile,
the play is divided into two phases. In the first phase, at odd
stages (resp. at even stages) player 1 (resp. player 2)
randomizes, thereby transmitting information to player 2 (resp. to
player 1), while player 2 (resp. player 1) plays his myopically
optimal action. The second phase starts when the player who is
randomizing plays his myopically optimal action, and lasts \emph{ad
infinitum}. Along this phase both players play their myopically
optimal action. Thus, the play path looks as follows: in the first
few stages the players alternately play their myopically
suboptimal action, and then the play switches to myopic play.

Randomizations
are informative: in stage 1 for instance, the mixed action used by
player 1 depends on $\textbf{t}$, so that the belief of player 2
in stage 2 depends on player 1's action in stage 1.

The play in the first phase has a cyclic
pattern, with period 4. The evolution of beliefs along the play is
illustrated in Figure \arabic{figurecounter}. This figure involves
two parameters, $p^\star$ and $q^\star$, which will later be
pinned down by equilibrium requirements.

\begin{center}
\label{figurebelief2}
\includegraphics{figurebelief-2.1}\\[0pt]
\end{center}

\centerline{Figure \arabic{figurecounter}: The play as long as both players play suboptimally.}

\bigskip

How to read this figure?
Consider first stage 1. In stage 1, player 2 plays with probability 1 his optimal action
(which is $b=1$ since $q>\frac12$). Consequently, the belief of player 1 in stage 2 is $p$,
the same as in stage 1.  Meanwhile, player 1 plays his suboptimal action $a=0$ with probability
$x_\textbf{t}$. The values of $x_0$ and $x_1$ are defined to be
$x_1:=\bar{x}=\displaystyle\frac{1-q}{q}\times \frac{q^\star-q}{q+q^\star-1}$ and
$x_0:=\underline{x}=\displaystyle\frac{q}{1-q}\times \frac{q^\star-q}{q+q^\star-1}$,
where $q^\star$ will be defined below.
By Bayesian updating, the posterior belief $\textbf{q}_2$
of player 2 in stage 2 is equal to $1-q<\frac12$ following $a=0$, and   is equal to $q^\star>q$ following $a=1$.
In player 2's eyes, the suboptimal action $a=0$ is played with probability
$x:= q \bar{x}+(1-q)\underline{x}=\frac{1-\delta}{\delta^2}$.
Note that $x$ and $q^\star$ solve $q= x (1-q)+ (1-x) q^\star$,
which reflects the martingale property of beliefs.
Because $q > 1/2$ it follows that $q^\star > q$ and therefore $x_1 < x_0$.

\addtocounter{figurecounter}{1}

Consider next stage 2. If player 1 played his optimal action $a=1$
in stage 1, players stop exchanging information, and repeat
forever their optimal actions, $a=1$ and $b=1$. If player 1 played
his suboptimal action $a=0$, player 2 reciprocates and assigns a
probability  $y_\textbf{s}$ to his suboptimal action (which is now
$b=1$ because $1-q<\frac12$).  The values of $y_0$ and $y_1$, are
set to $y_1:=\bar{y}=\displaystyle\frac{1-p}{p}\times
\frac{p^\star-p}{p+p^\star-1}$ and
$y_0:=\underline{y}=\displaystyle\frac{p}{1-p}\times\frac{p^\star-p}{p+p^\star-1}$
(where $p^\star$ is defined below) so that the belief of player 1
in stage 3 is either $p^\star$ or $1-p$. The overall probability
assigned to the action $b=1$ is $y:=p \bar{y} + (1-p)
\underline{y}$, which solves $p= y(1-p)+(1-y) p^\star$.

Consider now stage 3 (assuming $a=0$ was played in stage 1). If
the optimal action $b=0$ was played in stage  2, players stop
exchanging information and repeat their optimal actions, $a=1$ and
$b=0$. If player 2 played his suboptimal action in stage 2, player
1 reciprocates and plays as in stage 1, only with the roles of the
states/actions exchanged. To be precise,  player 1 assigns
positive probability to his suboptimal action (which is now $a=1$
because $1-p<\frac12$). As in stage 1, this probability is set to
$\bar{x}$, if the true state $\textbf{t}$ happens to be the state
which  player 2 currently considers more likely (which is now
state $t=0$ because $\textbf{q}_3=1-q<\frac12$), and it is set to
$\underline{x}$ otherwise. By Bayesian updating, the belief of
player 2 in stage 4 is either $q$ or $1-q^\star$. And so on.

Observable deviations\footnote{Such deviations consist in playing the currently suboptimal
action at a node at which the optimal action was expected.} by player $i$ are ignored by player $j$.
\bigskip

The value of $p^\star$ is dictated by equilibrium requirements.
Observe first that, at equilibrium and at any node where player 1 is
supposed to randomize, his expected continuation payoff must be
equal to $u_\star(p)$. Indeed, consider any such node, say at stage
$2n+1$. If he plays his optimal action, player 1 gets an expected
payoff of $u_\star(\textbf{p}_{2n+1})$ in stage $2n+1$, and in all
future stages as well, since the players then stop to exchange
information. Note that $u_\star(\textbf{p}_{2n+1})=u_\star(p)$,
because $\textbf{p}_{2n+1}$ is either equal to $p$ or $1-p$,
depending wether $n$ is even or odd. Because he is randomizing,
player 1 should be indifferent between both actions, and the claim
follows.

Let us now consider stage 1. If player 1 plays his suboptimal
action in stage 1, his continuation payoff from stage 3 on, is
equal to $u_\star(p)$ if player 2 plays $b=1$ in stage 2, and to
$u_\star(p^\star)$ if player 2 plays $b=0$. Since the
probabilities of these two events are equal to $y$ and $1-y$
respectively, the overall payoff of player 1 is then equal to
 \[(1-\delta)(1-u_\star(p))+\delta(1-\delta)u_\star(p)+\delta^2\left(y u_\star(p)+(1-y)u_\star(p^\star)\right),\]
 where the first two terms are the contributions of the first two stages to the overall payoff.
When equating this last expression with $u_\star(p)$, and using
$p>\frac12$, one obtains
\[p^\star=p+(2p-1)\frac{1-\delta}{\delta^2+\delta-1}.\]
The same argument, when applied to player 2, yields $\displaystyle q^\star=q+(2q-1)\frac{1-\delta}{\delta^2+\delta-1}$.

The parameter values $p^\star,
q^\star,\bar{x},\underline{x},\bar{y},\underline{y}$ all lie in
$(0,1)$ as soon as $\ep_\delta\leq p,q\leq 1-\ep_\delta$, where
$\ep_\delta=\frac{1-\delta}{\delta^2+\delta-1}$; that is, as soon
as initial beliefs are not too precise.
Note that $\ep_\delta$ goes to 0 as $\delta$ goes to 1.

Conversely, and whenever $\ep_\delta\leq p,q\leq 1-\ep_\delta$,
this profile is a Nash equilibrium of the repeated game. Observe
that, while player 1's payoff is $u_\star(p)$,  the payoff of
player 2 is equal to
\begin{eqnarray*}
f(q)&:=&(1-\delta)v_\star(q)+\delta \left( x v_\star(q) +(1-x) v_\star(q^\star) \right)\\
&=& v_\star(q)+ \frac{1-\delta}{\delta}\left( 2v_\star(q)-1\right)>v_\star(q).
\end{eqnarray*}

\subsubsection{Further equilibrium payoffs}\label{label further}

We here build upon the previous section, and introduce a class of simple equilibrium
profiles, that implement all equilibrium payoffs (in the limit $\delta\to 1$).


In these equilibria, most of the information on player 2's state
that player 1 will ever transmit is transmitted at stage 1 by
randomizing in that stage, and similarly, most of the information
on player 1's state that player 2 will ever transmit is
transmitted in stage 2. From stage 3
on, the players either implement the equilibrium profile defined
in the previous section, or switch to a myopic behavior. The
equilibrium that is implemented from stage 3 on depends on the
cost of information transmission in stages 1 and 2.

We will now be more precise. Let  $\bar{q},\underline{q},
\bar{p}^0,\underline{p}^0,\bar{p}^1$ and $\underline{p}^1$ be
arbitrary beliefs in $(0,1)$, such that $0 < \underline{q}\leq
\frac12<q <\bar{q} <1$, and $0<\underline{p}^a\leq \frac12 <p<
\bar{p}^a<1$ for $a\in\{0,1\}$. A further condition will be
imposed later. We let the discount factor $\delta$ be sufficiently
large such that all six beliefs, which are determined below, lie
in the interval $(\ep_\delta,1-\ep_\delta)$.

Player 1 randomizes in stage 1, and plays his optimal action in
stage 2. Player 2 plays his optimal action in stage 1, and
randomizes in stage 2. We choose these state-dependent
randomizations in such a way that the beliefs of the players
evolve as indicated in Figure \arabic{figurecounter}. Continuation
payoffs from stage 3 also appear on this figure. Observe that the
optimal action of player 2 in stage 2 depends on the action played
by player 1 in stage 1, since $\underline{q}\leq \frac12
<\bar{q}$.

\begin{center}
\label{figure-evol-belief}
\includegraphics{figure-evol-belief.1}\\[0pt]
\end{center}

\centerline{Figure \arabic{figurecounter}: The beliefs in the first three stages.}

\bigskip

Since all beliefs lie in $(\ep_\delta,1-\ep_\delta)$, this strategy profile is well-defined. As an example, consider the top final node in Figure \arabic{figurecounter}.
At that node, players' beliefs are $\bar{p}^1$ and $\bar{q}$, and players switch to the equilibrium profile which we designed in the previous section, taking $\bar{p}^1$ and $\bar{q}$ as initial beliefs. \addtocounter{figurecounter}{1}

We claim that equilibrium conditions for player 2 are satisfied. Indeed, note that the function $f(\cdot)$ solves
\[v_\star(\tilde{q})=(1-\delta) (1-v_\star(\tilde{q}))+\delta f(\tilde{q}), \mbox{ for each }\tilde{q}\in [0,1].\]
Hence, whatever be the action played by player 1 in stage 1, player 2 is indifferent between his two actions at stage 2, as desired.

The overall payoff to player 2 is equal to $(1-\delta)v_\star(q)+\delta \left(x v_\star(\bar{q})+(1-x) v_\star(\underline{q})\right).$
As $\delta$ goes to 1, the weight of the first stage decreases to 0. Since $\underline{q}$ and $\bar{q}$ were arbitrary, this  payoff  spans the whole interval $(q,1)= (v_\star,v_{\star\star})$.

\bigskip

It remains to ensure that player 1 is indifferent between both actions in stage 1. That is, the difference in payoffs in stage 1 should be offset by a difference in
the expected continuation payoffs.  This is done by adjusting the amount of information
disclosed by player 2 in stage 2 to the action played by player 1 in stage 1: more information is disclosed if player 1 plays his suboptimal
action in stage 1.

Formally, the indifference condition translates to:
\begin{equation}\label{equilibrium1}
(1-\delta)(u_\star(p)-\left(1-u_\star(p)\right))=
\delta^2(y^0u_\star(\bar{p}^0)+(1-y^0)u_\star(\underline{p}^0)) -
\delta^2(y^1u_\star(\bar{p}^1)+(1-y^1)u_\star(\underline{p}^1)).\end{equation}
A necessary condition is that $\underline{p}^0,\bar{p}^0$ be chosen such that the overall payoff when playing the suboptimal action in stage 1 is
at least $u_\star(p)$. Conversely, one can check that for any such choice of  $\underline{p}^0,\bar{p}^0$, there exist
$\underline{p}^1,\bar{p}^1\in (\underline{p}^0,\bar{p}^0)$ such that equation (\ref{equilibrium1}) holds.

The overall payoff to player 1 is then equal to
\[(1-\delta)(1-u_\star(p))+\delta(1-\delta)u_\star(p)+\delta^2\left(y^0u_\star(\bar{p}^0)+(1-y^0)u_\star(\underline{p}^0)\right).\]
A $\delta$ goes to 1, the weight of the first two stages decreases to zero. Since $\underline{p}^0,\bar{p}^0$ are arbitrary (subject to the individual rationality condition), this payoff spans the whole interval $(p,1)=(u_\star,v_{\star\star})$.



\subsection{The self-informed case}

While sticking to the binary setup, we now allow for private
signals on one's own state, and we informally describe the main
ideas of the construction. All details appear in the Appendix. All
equilibria share a common structure. Equilibrium play is divided
into four successive phases.

In phase 1, players 1 and 2 report their signals $\textbf{l}_S$
and $\textbf{m}_T$, by means of encoding them into finite strings
of actions in a one-to-one way.

The crucial phase is Phase 2. It is designed so as to provide
incentives for truthfully reporting in phase 1. In expectation,
very little information is exchanged in that phase. It is
organized as follows. Each player $i$ draws a random
message,\footnote{We distinguish between \emph{signals}, drawn by
nature, and \emph{messages}, chosen by the players and possibly
subject to strategic considerations.} and `sends' it to player $j$
(again, by encoding messages into sequences of actions). This
message is slightly correlated with player $j$'s state, but is
independent of the report of player $j$ in phase 1.\footnote{Even
if drawn independently of player $j$'s report, it is crucial that
it is sent only after player $j$'s report.} Next, player $j$ plays
a long and deterministic sequence of actions. We refer to these
two subphases as phases 2.1 and 2.2 respectively. The length of
phase 2 is of the order of $\ln (1-\theta)/\ln\delta$ for some
positive $\theta$, hence  phase 2
 contributes a fraction of $\theta$ to the total discounted payoff.\footnote{The value of $\theta$ is
 independent of $\delta$, but may  have to be adjusted to the equilibrium payoff.} The prescribed sequence of actions depends both
 on player $j$'s report in phase 1, and on player $i$'s message in phase 2. The intuition behind this structure is discussed later.

In the continuation game that starts after phase 2, players hold
no private information about their own state and they implement an
equilibrium of the resulting self-ignorant game. The bulk of
valuable information exchange takes place in phase 3. Each player
$i$ draws a random message that is correlated with player $j$'s
state, and `sends' it to player $j$. The degree of correlation is
adjusted as a function of player $j$'s equilibrium  payoff. The
(conditional) law of player $i$'s message does not depend on
player $j$'s report.

Phase 4 consists of the remaining stages. Prior to the first
stage, $N$, of phase 4, each player $i$ assesses the belief held
by player $j$, assuming reports in phase 1 were truthful. He also
assesses the total discounted cost incurred by player $j$ in all
previous stages -- with the exception of the stages of phase 2.2.
Players switch to the equilibrium profile, as designed in the
previous section, which is associated with a payoff  equal to the
myopic payoff, $(u_\star(\textbf{p}_N)$, $v_\star(\textbf{q}_N))$,
plus a \emph{bonus} that exactly offsets the cost incurred
earlier.
 Because few stages are taken into account for the computation of the past cost, this bonus is at most of the order of $(1-\delta)c$, for some positive $c$. Because it is small, hardly any valuable information is exchanged during phase 4. The role of the bonus is to ensure that each player, when randomizing in either phase 2 or phase 3, is indifferent between all messages.

Observable deviations of player $i$ trigger a myopic play of player $j$.
\bigskip

We now provide a few insights into phase 2, by means of two examples.
Phase 2 is designed so that the expected payoff of a player in phase 2.2 is strictly higher
when reporting truthfully than when not.

Assume first that $L_S=\{\underline{l}_S,\bar l_S\}$, and that
player 1 assigns to state $s=1$ a posterior probability $\frac13$
when observing $\underline{l}_S$, and  $\frac23$ when observing
$\bar{l}_S$. Note that the optimal action of player 1 is $a=0$ in
the former case, and $a=1$ in the latter case. Here, incentives
can be provided by simply requiring that player 1 repeats the
action which is optimal given the signal he reported. Indeed,
assume for concreteness that player 1 receives
$\textbf{l}_S=\underline{l}_S$. If player 1 reports truthfully
$\underline{l}_S$ and plays as required the action $a=0$, his
expected payoff is $\frac23$ in all stages of phase 2.2. If
instead player 1 untruthfully claims that he received $\bar{l}_S$ and next
plays the action $a=1$,
which is optimal given his report but in fact suboptimal, his
expected payoff is only $\frac13$. Such an incentive scheme is
appropriate whenever the myopically optimal action of player 1 is
in one-to-one relation with $l_S$.

In some cases, more complex schemes are needed.
Assume that $L_S=\{\underline{l}_S,l_S^\star,\bar{l}_S\}$,
and that the posterior probability assigned to state $s=1$ is respectively $\frac13$ following $\underline{l}_S$,
$\frac12$ following $l^\star_S$ and $\frac23$ following $\bar{l}_S$.
Here, conditional on $\textbf{s}$,
we let player 2 send a message $\mu$ in $\{\ast,0,1\}$. The probability of $\mu=\ast$ is close to 1,
and does not depend on $\textbf{s}$, so that this message conveys no information. Conditional on $\mu\neq \ast$, the
message $\mu$ coincides with $\textbf{s}$ with probability $1-\ep$,
where $\ep>0$ is small. Next, player 1 is to repeat a sequence $\vec a$ of actions of length 8,
which depends on both report and disclosed state:
\begin{itemize}
\item If $\mu=\ast$, the sequence $\vec a$ is independent of player 1's report;
\item Following a report of $l^\star_S$, the sequence $\vec a$
contains four 0's (and four 1's), irrespective of the message
sent by player 2;
\item Following a report of $\underline{l}_S$,
$\vec a$ contains six 0's (and two 1's) if the player 2's message
is $\mu=0$, and seven  0's  if  $\mu=1$;
\item Following a report
of $\bar{l}_S$, $\vec a$ contains six 1's if $\mu=1$, and seven
1's  $\mu=0$.
    \end{itemize}

It is straightforward that strict incentive requirements are met for $\ep>0$ small enough.

\bigskip
We conclude by explaining the equilibrium logic. Assuming that
player $i$ chooses to truthfully report in phase 1, the
computation of the continuation profile in phase 4 ensures that
player $i$ is indifferent between all messages in phases 2.1 and
3, and has no profitable deviation in phase 4. However, the
prescribed sequence of actions in phase 2.2  may involve
suboptimal actions,\footnote{The cost of which is \emph{not} taken
into account in the bonus.} and player $i$ may consider deviating
from this sequence. Such a deviation fails to be profitable as
soon as the marginal value of the information received in phase 3
compensates the cost incurred along phase 2.2. This condition puts
a (mild) constraint on the relation between the duration of phase
2 and the amount of information disclosed in phase 3.

What if player $1$, say, chooses instead to mis-report his signal
in phase 1, and claims that he received the signal $l'_S$ when in
fact receiving $l_S$? If $l_S$ and $l'_S$ are \emph{equivalent},
in the sense that the belief of player 1 is the same following
either signal: $p(\cdot|l_S)=p(\cdot|l'_S)\in \Delta(S)$, such a
deviation is irrelevant.\footnote{We note that the definition of
equivalent signals adopted here is quite specific to the binary
case, and will be different in the general case.} Hence, assume
that player 1 reports a signal  $l'_S$ that is not  equivalent to
$l_S$, and plays consistently with his report
thereafter.\footnote{The case where player $1$ fails to play
consistently, and deviates from the prescribed sequence will pose
no specific problem. It is left to the appendix.} Three properties
combine to ensure that such a deviation fails to be profitable.
First, observe that the joint distribution of the two messages
received from  player $2$ (in phases 2 and 3) does not depend on
player $1$'s report. Hence, misreporting does not affect the
information received from player 2 in phases 2 and 3. Next,
observe that the continuation strategy of player $2$ in phase 4
entails almost no information disclosure. As we show in the
Appendix, this implies that the best-reply continuation payoff of
player $1$ in phase 4, is at most
$u_\star(\textbf{p}_N)+(1-\delta)C$, for some positive $C$, where
$\textbf{p}_N$ is the `true' belief of player 1. That is, in spite
of the fact that player 2's continuation strategy is based on a
\emph{wrong }assessment of player 1's belief,\footnote{Because it
is assuming that player 1 reported truthfully.} the maximal gain
for player 1 is very small. Finally, by design, the expected
payoff received by  player 1 in phase 2 is \emph{strictly} higher
when reporting $l_S$ than $l'_S$.

Because the weight of phase 2 in the total payoff is $\theta$, the small gain obtained in phase 4 can not offset the loss incurred in phase 2.2 when reporting $l'_S$ rather than $l_S$.

\bigskip
\section{Extensions and concluding comments}

\label{sec comments}

\subsection{\emph{Ex ante} vs. \emph{interim} valuable information}

We here discuss how to adapt the statement of the Main Theorem to a situation in which information is \emph{ex ante}
valuable, but need not be \emph{interim} valuable. That is, we assume that both inequalities $u_\star <u_{\star\star}$ and $
v_\star<v_{\star\star}$ hold, but that the information held by player $i$ need not be valuable in the sense of Definition \ref{definition valuable}.

We will assume that with $p$-probability 1, player 1 has a unique
myopically optimal action at $\textbf{p}_1$, and that the
symmetric property.\footnote{If a player has two myopically optimal actions at $\textbf{p}_1$,
he can costlessly reveal information to the other player.} holds for player 2
 Define
 $\bar{L}_S\subset L_S$
to be the set of signals $l_S$ for which the inequality (\ref{i-valuable}) does not hold, and define $\bar{M}_T\subset M_T$ in a symmetric way.
We argue that the limit set of sequential equilibrium payoffs is equal to the set
\[ E:=[u_\star, u_\star q(\bar{M}_T) +u_{\star\star}q(M_T\setminus \bar{M}_T)]\times [v_\star, v_\star p(\bar{L}_S)
 +v_{\star\star}p(L_S\setminus \bar{L}_S)].\]
Observe first that at any equilibrium,  if $\textbf{m}_T\in \bar{M}_T$, player 2 realizes (interim) that the information held by player 1 has no value, and player 2 will then repeat his unique, optimal action at $\textbf{q}_1$. Thus, conditional on $\textbf{m}_T$, and using the independence assumption \textbf{A.2},
player 1's expected payoff is at most $u_\star$ if $\textbf{m}_T\in \bar{M}_T$, and at most  $u_{\star\star} $ if $\textbf{m}_T\notin \bar{M}_T$. Using the same argument for player 2, this shows that any equilibrium payoff vector lies in the set $E$.

Conversely, consider the following class of strategy profiles. In the first two stages, each player $i$ `tells' player
$j$ whether the information held by $j$ has positive value to $i$ or not. This is done as follows.
In stage 1, player $i$ plays his myopically optimal action.
In stage 2, player 1, say, repeats this action if $\textbf{l}_S\in \bar{L}_S$, and switches to a different (suboptimal) action
if $\textbf{l}_S\notin \bar{L}_S$ to signal his willingness to disclose/acquire information. If \emph{both} players switched in stage 2, they implement from stage 3 on  an equilibrium such as we designed in the proof of the Main Theorem.
Otherwise, players repeat their stage 1 action. The sole role of stage 1 is to instruct the other player \emph{how} to interpret the action played in stage 2.

If $\textbf{l}_s\in \bar{L}_S$, it is strictly dominant for player 1 to repeat his optimal action throughout, as required. Indeed,
playing a different action in stage 2 would only lower player 1's payoff, with no benefit since player 2's information is valueless.

If  $\textbf{l}_s\in L_S\setminus \bar{L}_S$,
 player 1's overall payoff is $u_\star(\textbf{p}_1)$ if he pretends that the information
held by player 2 is valueless. However, because there is a
positive $q$-probability that $\textbf{m}_T\notin \bar{M}_T$, it
is a best reply for player 1 to switch to a suboptimal action in
stage 2 as soon as the value of the information disclosed by
player 2 exceeds on average the cost incurred in stage
2.\footnote{It cannot be optimal for player 1 to pretend that
$\textbf{l}_S\notin \bar{L}_S$, yet to lie about his optimal
action.}

In such an equilibrium, conditional on $\textbf{l}_S$, player 1's payoff is  $u_\star(\textbf{p}_1)$, which is then
also equal to $\E[u_\star(\tilde{\textbf{p}})|\textbf{l}_S]$, if $\textbf{l}_S\in \bar{L}_S$.
If instead $\textbf{l}_S\notin \bar{L}_S$, then with probability $q(M_T\setminus \bar{M}_T)$, player 1's payoff may be as high as $\E[u_\star(\tilde{\textbf{p}})|\textbf{l}_S]$. Otherwise, player 1's payoff will be (approximately) $u_\star(\textbf{p}_1)$.
The \emph{ex ante} expected payoff can therefore be as high as
\[u_\star q(\bar{M}_T)  +q(M_T\setminus \bar{M}_T) u_{\star\star}.\]

\subsection{The correlated case}
\label{section correlated}

The case where the independence assumption \textbf{A.2} does not hold raises significant challenges. For the sake of simplicity,
we focus here on the binary setup.

A first difficulty is that myopic play need not be an equilibrium. Hence it is not clear what is the lowest equilibrium payoff.

\begin{bexample}
Here, states and signals are functions of an auxiliary variable $\omega$,
which can assume four different possible values $\omega_i$, $i\in\{1,2,3,4\}$.
The probabilities of the different values, and the way $\omega$ determines the states and signals is as follows:

\[
\begin{array}{c|c|c|c|c|c}
& & \hbox{state of}& \hbox{state of} & & \\
\hbox{state of} & & \hbox{nature $s$ of} & \hbox{nature $t$ of} & \hbox{signal $l$ of} & \hbox{signal $m$ of}\\
\hbox{the world}& \hbox{prob.} & \hbox{player 1} & \hbox{player 2}& \hbox{player 1} & \hbox{player 2}\\
\hline
\omega_1 & \frac16 & 0 & 0 & l^1 & m^1\\
\omega_2 & \frac13 & 0 & 0 & l^2 & m^1\\
\omega_3 & \frac13 & 1 & 0 & l^1 & m^2\\
\omega_4 & \frac16 & 1 & 1 & l^2 & m^2
\end{array}
\]

When behaving in a myopically optimal way, players play as follows. In stage 1, player 2 plays $b=0$.
Indeed, the probability he assigns to state $t=0$ is either 1 or $\frac23$.
Meanwhile, player 1 plays  $a=1$ if $l=l^1$,
and $a=0$ if $l=l^2$.
Indeed, the probability assigned to state $s=0$ is $\frac13$ in the former case,
and $\frac23$ in the latter case. In stage 2,
player 1 repeats his stage 1 action.
On the other hand, player 2 can deduce $\omega$ from player 1's stage 1 choice.
If player 1 played $a=1$, player 2 repeats his stage 1 action.
If instead player 1 played $a=0$, player 2 will play either $b=0$ or $b=1$,
depending on player 2's information. In the former case, players repeat forever their stage 2 action.
In the latter, player 1 deduces $\omega$ from player 2 stage 2's action,
and obtains a payoff of 1 in all later stages.
This obviously creates an incentive for player 1 to deviate in stage 1,
and to always play $a=0$, in order to learn the value of $\omega$ in stage 3.
Thus, it is not an equilibrium to behave myopically. In this example,
information exchange is a \emph{consequence} of equilibrium behavior.
\end{bexample}

A second difficulty lies in understanding the extent to which information can be exchanged.
In Section \ref{sec periodic} we described specific periodic equilibria.
In fact, there are many degrees of freedom in this construction, and the choice of using periodic equilibria was made to facilitate the computation of the equilibrium requirements.
It turns out that when information is correlated there are no degrees of freedom in determining the beliefs,
and the sequence of beliefs is not periodic:
it satisfies uninspiring recursive equations, that do not seem to have closed-form solutions.
Preliminary numerical evidence seems to suggest that some equilibria can be constructed along these lines.

\subsection{The finite horizon case}

We here comment on the assertion that the exchange of information relies on the horizon being unbounded.
We argue that, when the horizon is finite, the myopic equilibrium is the unique Nash equilibrium
of the binary example, as soon as $\textbf{p}_1\neq\frac12$ $p$-a.s., and $\textbf{q}_1\neq \frac12$ $q$-a.s.
This claim is an illustration that in many cases of interest, a
bounded horizon prevents players from exchanging information.

Let
$T$ be the length of the game, and let a Nash equilibrium be
given. We prove that, for any sequence $h_n$ of length $n\geq 0$
of moves that occur with positive probability, the
continuation payoff of player 1 is $u_\star(p_n)$ if $q_n\neq
\frac12$, and the continuation payoff of player 2 is
$v_\star(q_n)$ if $p_n\neq \frac12$. We argue by backward
induction over $n$. The claim holds trivially for  $n=T$. Assume that the claim holds for every
history of length $n+1$, and consider a history of length $n$ such
that $q_n\neq \frac12$. We proceed in two steps.

\bigskip\noindent

\textbf{Step 1}: We claim that $\E[u_\star(\textbf{p}_{n+1})|h_n]=u_\star(p_n)$.

Note first that the claim trivially holds if player 2 does not randomize following $h_n$,
since one then has $\textbf{p}_{n+1}=p_n$ with probability 1.

Assume now that player 2
randomizes at stage $n$ following $h_n$. Observe that $\textbf{p}_{n+1}$ must then be equal to $\frac12$
with positive probability. Otherwise, by the induction hypothesis the continuation payoff of player 2 would be
$v_\star(q_{n+1})$, irrespective of the action played by player 2
in stage $n$. But then player 2 would not be
indifferent in stage $n$ between the two actions -- a
contradiction.

It follows that both possible values of $\textbf{p}_{n+1}$
lie in the same half of the interval $[0,1]$. The claim follows,
because $\E[\textbf{p}_{n+1}|h_n]=p_n$ and because $u_\star$ is linear in both $[0,\frac12]$ and $[\frac12,1]$.

\bigskip\noindent

\textbf{Step 2}: Conclusion

Since $q_n\neq \frac12$, the event $\textbf{q}_{n+1}\neq \frac12$ has a positive
probability conditional on $h_n$. Let $a\in A$ be any action that is played with positive probability
at $h_n$, and following which $\textbf{q}_{n+1}\neq \frac12$.
When playing $a$ and by the induction hypothesis, the continuation payoff of player 1 in stage $n+1$ is equal to $u_\star(\textbf{p}_{n+1})$.
As a consequence, and when playing $a$, the continuation payoff of player 1 in stage $n$ is equal to $(1-\delta)u(p_n,a)+
\E[u_\star(\textbf{p}_{n+1})|h_n]\leq u_\star(p_n)$, using \textbf{Step 1}.
By the equilibrium property, it follows that
the continuation payoff of player 1 in stage $n$ (following $h_n$) is equal to $u_\star(p_n)$.

\bigskip

We note, though, that there may be cases in which information exchange is possible, even when the game is finitely repeated.
The following example illustrates this point.

\label{example19}
Consider the binary example, and add to each action set one more action, that we denote by $2$,
and that yield payoff $\frac23$ irrespective of the state.
The optimal action of player 1, as a function of the belief assigned to state $s=1$,
is given by Figure \arabic{figurecounter} below, and the structure of player 2's best-response is similar.

\begin{center}
\label{figure-best-action}
\includegraphics{figure-best-action.1}\\[0pt]
Figure \arabic{figurecounter}: The optimal action of player 1
\end{center}
\addtocounter{figurecounter}{1}
\begin{example}

Assume that the two states are equally likely, and that player 1
learns $\textbf{t}$ while
 player 2 learns $\textbf{s}$.
Suppose that in stage 1 player 1 plays $[\frac13(a_0),\frac23(a_1)]$ if $\textbf{t}=0$,
and $[\frac23(a_0),\frac13(a_1)]$ if $\textbf{t}=1$,
and suppose that player 2 plays in an analog way.
Player 1's belief in stage 2 is either $[\frac13(0),\frac23(1)]$ or $[\frac23(0),\frac13(1)]$,
depending on player 2's  action in stage 1. In the former case,
 we let player 1 play either $a_0$ or $a_2$ depending on $\textbf{t}$.
 In the latter case, we let player 1 play either $a_1$ or $a_2$,
 depending on $\textbf{t}$.\footnote{We let player 1 repeat $a_2$ forever if player 2 played $b_2$.}
Let the behavior of player 2 in stage 2 be analog.
Provided $\delta$ is high enough,
this strategy pair is an equilibrium, in which players exchange all information in two stages.
\end{example}

The type of construction presented in Example \ref{example19} is valid only for intermediate values of initial beliefs,
and for self-ignorant games.
Moreover, it is not clear how to use
such profiles as continuation profiles in general games.

\subsection{Signals along the Play}

How do our results change when players receive independent signals, both on their own state and
on the other player's state, along the game?
If player $i$ receives signals on his own state along the play,
then, because he is patient,
he will wait until almost all the information that he can get about is own state is received.
However, if in subsequent stages he keeps on receiving information on his own state,
then his belief changes, even though the changes are small.
These changes have the effect that the other player does not know how to compensate player $i$ for playing suboptimally,
and our construction fails.

If, on the other hand, player $i$ received signals on the other player's state along the game,
then again the players can wait until player $i$ receives almost all the information that he can,
and then the players can implement the equilibrium that we construct.
Thus, our results remain valid in this case.

\appendix

\section{Self-Ignorant Games}\label{sec a-1}

We assume throughout the Appendix that all payoffs are in $[0,1]$.
In this section we assume that no player receives private information on his own state. Equivalently,
both sets $L_S$ and $M_T$ are singletons. For simplicity, we here write $L$ and $M$ instead
of $L_T$ and $M_S$.

Let initial distributions $p\in \Delta(S\times M)$ and $q\in \Delta(T\times L)$ be given. We denote the corresponding game by $\Gamma(p,q)$.
W.l.o.g., we assume that $p(m)>0$ for each $m\in M$.\footnote{And that $q(l)>0$ for each $l\in L$: to avoid useless repetitions, we sometimes
state properties for player 1, with the implicit understanding that analog properties hold for player 2 as well.}

We assume that
the information of each player $i$ is valuable to player $j$. Since $\Gamma(p,q)$ is a self-ignorant game, this
is equivalent to assuming that there is no action $a\in A$ that
is optimal at \emph{all} distributions
 $p_m:=p(\cdot|m)$, $m\in M$. Equivalently, one has
\begin{equation}\label{valueignorant}
u_\star=u_\star(p)<u_{\star\star}=\sum_{m\in M}p(m)u_\star\left(p_m\right).
\end{equation}

By Bayes' rule, the belief $\textbf{p}_n$ of player 1 at stage $n$
is the weighted average of $\{p_m \otimes 1_m, m \in M\}$, where
the weight of $p_m \otimes 1_m$ is equal to the probability that
the signal of player 2 is $m$, given player 1's information at
stage $n$.\footnote{This is a way of stating that, as the play
proceeds, the belief of player 1 on $\textbf{m}$ evolves, but the
distribution of $\textbf{s}$ conditional on $\textbf{m}$ remains
fixed.} Thus, the set of possible values of $\textbf{p}_n$ is
\begin{equation}
\label{equ 87}
\Delta^\dag(S\times M) = \conv\{p_m \otimes 1_m, m \in M\}.
\end{equation}
Because $p(m)>0$, $p$ lies in the (relative) interior of
$\Delta^\dag(S\times M)$, which we denote by
$\overset{\circ}{\Delta}^\dag(S\times M)$. We define $q_l =
q(\cdot | l)$ for $l\in L$, and the set $\Delta^\dag(T\times L)$
is defined in a symmetric way.

It is convenient to allow the initial distribution to vary, to account for the fact that beliefs may change along
the play. Since all beliefs lie in $\Delta^\dag(S\times M)$ and $\Delta^\dag(T\times L)$, we will only
consider initial distributions in these sets. We still denote arbitrary such distributions by $p$ and $q$.
\bigskip

In this section, we prove the two propositions below.

\begin{proposition}\label{prop_1}
Let $p\in \overset{\circ}{\Delta}^\dag(S\times M)$ and $q\in \overset{\circ}{\Delta}^\dag(T\times L)$ be given.
There exists $\ep>0$ and $\bar{\delta}<1$ such that the following holds. For every $\delta\geq \bar{\delta}$, every payoff vector in
$[u_\star(p),u_\star(p)+\ep]\times [v_\star(q),v_\star(q)+\ep]$ is a sequential equilibrium payoff of $\Gamma(p,q)$.
\end{proposition}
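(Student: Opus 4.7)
The plan is to adapt the two-stage construction of Section \ref{section example} (Binary Example) to the general self-ignorant setting, using the interior-point assumption on $(p,q)$ to solve the induced incentive system. Since Proposition \ref{prop_1} targets payoffs in a small neighborhood of the myopic pair $(u_\star(p), v_\star(q))$, the bulk of information exchange can be compressed into the first two stages, with the continuation designed solely to enforce incentive compatibility.

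First I would construct a \emph{base continuation profile}, generalizing Section \ref{sec periodic}. For each pair $(p', q')$ in the relative interior of $\Delta^\dag(S\times M) \times \Delta^\dag(T\times L)$ sufficiently close to $(p,q)$, the profile alternates speakers: at odd stages player~1 randomizes conditional on $\textbf{l}$ between the myopically-optimal ``stop'' action and a designated ``informative'' action, and likewise player~2 at even stages. Playing the stop action triggers permanent myopic play; otherwise the alternation continues. The randomization probabilities are calibrated to enforce (i) speaker indifference between the two actions, (ii) the martingale property of beliefs, and (iii) the listener's strict preference for her myopic action (automatic from the one-stage gain). Under the valuability hypothesis (\ref{valueignorant}) and the interior assumption, these probabilities can be placed in $(0,1)$ for $\delta$ close to $1$. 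Repeating the indifference argument at every speaking node shows that the resulting profile delivers ex-ante payoff $u_\star(p')$ to the first speaker and $v_\star(q') + \eta(q',\delta)$ with $\eta>0$ of order $1-\delta$ to the first listener; a mirror version gives the symmetric payoffs.

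Next I would build a two-stage \emph{disclosure prefix} along the lines of Section \ref{label further}. In stage~1 player~1 randomizes $\sigma_1(\cdot \mid l)$ over $A$, inducing a distribution of posteriors $\textbf{q}_2$ for player~2; in stage~2 player~2 randomizes $\sigma_2(\cdot \mid m, a_1)$ over $B$ with amplitude depending on player~1's stage-1 action $a_1$, so as to offset his stage-1 cost. From stage~3 on, contingent on the realized stage-1 and stage-2 actions, the players implement either one of the base continuation profiles of the previous step or plain myopic play. Given a target $(u,v)$ near $(u_\star(p), v_\star(q))$, the prefix parameters and continuation choices are selected jointly to match $(u,v)$ and to satisfy the two indifference equations (the analogues of (\ref{equilibrium1})): the compensating bonus embedded in the continuation profile exactly offsets the stage-1/stage-2 disclosure costs, and the spread of posteriors inside $\Delta^\dag$ delivers the desired ex-ante payoffs.

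The main technical obstacle is solvability of this indifference system in the general (non-binary) setting, where beliefs live in higher-dimensional simplices and the explicit formulas for $p^\star, q^\star$ of Section \ref{sec periodic} are unavailable. I would handle this by parameterizing the stage-1 and stage-2 disclosures by a single scalar ``amplitude'' along a fixed direction inside $\Delta^\dag$, reducing the system to a small number of scalar equations in scalar unknowns; the valuability inequality (\ref{valueignorant}) together with the interior-point assumption on $(p,q)$ makes the linearization at the myopic benchmark non-degenerate, so the implicit function theorem delivers solutions for all $\delta$ close to $1$ and all target payoffs in a sufficiently small neighborhood. Finally, to upgrade the resulting Nash equilibrium to a sequential equilibrium, I would define off-path beliefs by Bayes whenever possible, and after any zero-probability deviation declare that the deviator has ``triggered'' myopic continuation, which is sequentially rational for the non-deviator since his own-state belief is unaffected along the equilibrium path.
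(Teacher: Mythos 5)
Your overall architecture is the paper's: a cyclic alternating-disclosure continuation profile generalizing Section \ref{sec periodic} (the paper's Lemma \ref{lemm_1}, verified there via an 8-state automaton and dynamic programming), preceded by a two-stage disclosure prefix generalizing Section \ref{label further} (the paper's Lemma \ref{lemm_2}), with off-path deviations answered by frozen beliefs and permanent optimal-action play. The one place where your argument would not go through as written is the solvability step. You propose to parameterize the disclosures by a scalar amplitude along a fixed direction through $(p,q)$ and invoke the implicit function theorem, claiming the linearization at the myopic benchmark is non-degenerate. But $u_\star$ is convex and \emph{piecewise linear}: unless $p$ happens to sit on a kink, $u_\star$ is affine in a neighborhood of $p$, so the marginal value of a small spread of posteriors around $p$ is exactly zero --- the linearization you need is degenerate, and (\ref{valueignorant}) does not rescue it, since it only guarantees that $u_\star$ is non-affine \emph{somewhere} on $\Delta^\dag(S\times M)$, not locally at $p$; moreover the IFT is unavailable because the relevant maps are not $C^1$.

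The paper's fix is to keep the posterior spread bounded away from zero: it fixes a segment $[p^0,p^1]$ in the interior of $\Delta^\dag(S\times M)$ on which $u_\star$ is \emph{not} affine (such a segment through $p$ exists precisely because of (\ref{valueignorant})), sends player 1's belief in a single stage essentially to $p^0$ or to a point near $p^1$ --- i.e., across a kink of $u_\star$ --- and then tunes only the secondary parameters: the positions of the reward posteriors $\bar p^0,\bar p^1$ along the line $D^1$ are pinned down by inverting the piecewise-affine, monotone information-gain functions $h_{p^0},h_{p^1}$ (an intermediate-value argument, not IFT), and in the proof of Proposition \ref{prop_1} proper a deterministic delay $N_1$ of the first disclosure is inserted so as to hit targets strictly between $u_\star(p)$ and $yu_\star(p^0)+(1-y)u_\star(p^1)$. (The paper notes that shrinking $[p^0,p^1]$ toward $p$, which is closer in spirit to your amplitude idea, is an admissible alternative to the delay; the delay is retained mainly because it also delivers Proposition \ref{prop_2}.) If you replace your IFT step by this fixed-segment, monotone-inversion argument, the rest of your outline matches the paper's proof.
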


Given such $p$ and $q$, a payoff vector $\gamma \in
[u_\star(p),u_\star(p)+\ep]\times [v_\star(q),v_\star(q)+\ep]$,
and a discount factor $\delta$, we will construct an equilibrium
profile $(\sigma_{p,q,\gamma},\tau_{p,q,\gamma})$ in
$\Gamma(p,q)$, with payoff $\gamma$. Proposition \ref{prop_2}
bounds the possible gain of player 1 if player 2 has an incorrect
belief on $p$, provided $\gamma$ is close to the myopically
optimal payoff.


\begin{proposition}\label{prop_2}
Let  $p\in \overset{\circ}{\Delta}^\dag(S\times M)$, $q\in \overset{\circ}{\Delta}^\dag(T\times L)$ and
  $c>0$ be given. There exists a constant $C>0$  with the following property.
  For every discount factor $\delta$, every payoff vector $\gamma$ such that $\|\gamma - (u_\star(p),v_\star(q))\|_\infty\leq (1-\delta)c$,
  and every $p'\in \Delta^\dag(S\times M)$, one has
\[\gamma^1(p',q,\sigma,\tau_{p,q,\gamma})\leq u_\star(p')+(1-\delta)C,\mbox{ for every strategy }\sigma.\]
\end{proposition}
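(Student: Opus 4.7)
The plan rests on the fact that, when $\gamma$ is within $(1-\delta)c$ of $(u_\star(p), v_\star(q))$, the strategy $\tau_{p,q,\gamma}$ constructed in Proposition~\ref{prop_1} transmits only an $O(1-\delta)$ amount of information about $\textbf{m}$, so that player~1's best-response value lies uniformly close to the myopic payoff $u_\star(p')$ for any $p' \in \Delta^\dag(S\times M)$.

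First, for any strategy $\sigma$, player~1's conditional stage-$n$ payoff given belief $\textbf{p}_n$ is at most $u_\star(\textbf{p}_n)$, so
\[
\gamma^1(p',q,\sigma,\tau_{p,q,\gamma})\leq (1-\delta)\sum_{n\geq 1}\delta^{n-1}\mathbf{E}_{p',q,\sigma,\tau_{p,q,\gamma}}\!\left[u_\star(\textbf{p}_n)\right].
\]
Since $u_\star$ is a maximum of finitely many affine functions on a compact simplex, it is $K$-Lipschitz for some constant $K$, which gives
\[
\gamma^1(p',q,\sigma,\tau_{p,q,\gamma}) - u_\star(p')\leq K(1-\delta)\sum_{n\geq 1}\delta^{n-1}\mathbf{E}\!\left[\|\textbf{p}_n-p'\|\right].
\]
It thus suffices to bound the discounted expected variation of the belief $\textbf{p}_n$ by $C_0(1-\delta)$, with $C_0$ independent of $\delta$, $p'$ and $\sigma$.

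For this core step I would exploit the explicit structure of $\tau_{p,q,\gamma}$: in the construction, player~2's informative (non-myopic) play is concentrated in a bounded number of early stages, with per-stage deviation of order $(1-\delta)c$, so that the total designed information content is $O((1-\delta)c)$. Two ingredients then give the desired bound uniformly in $p'$. First, the $\Delta^\dag$ condition ensures $\textbf{p}_n(\cdot|m) = p_m$ under both $p$ and $p'$, so the Bayesian updates of $\textbf{p}_n$ use the same likelihood ratios $\tau(m,h)[b]/\tau(m',h)[b]$ regardless of the prior. Second, since $p, p' \in \overset{\circ}{\Delta}^\dag(S\times M)$, the $M$-marginals of $p$ and $p'$ are mutually absolutely continuous with Radon--Nikodym derivative bounded by a constant depending only on $p, p'$. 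A measure-change from $p'$ to $p$ then reduces the problem to bounding $(1-\delta)\sum_n \delta^{n-1}\mathbf{E}_p[\|\textbf{p}_n-p\|]$, which is controlled by the designed information content, hence $O((1-\delta)c)$.

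The main obstacle is handling arbitrary strategies $\sigma$ of player~1: an off-path strategy might generate public histories at which $\tau_{p,q,\gamma}$'s prescribed continuation play is in principle different. I would rely on the specification of $\tau_{p,q,\gamma}$ off the equilibrium path given in the construction of Proposition~\ref{prop_1}---namely that observable deviations by player~1 trigger either myopic play by player~2 or continuation on the prescribed schedule---to ensure that player~1's deviations cannot \emph{increase} the information transmitted by $\tau$. Combined with the bounded informative phase and the measure-change argument, this should yield the proposition with $C = KC_0$.
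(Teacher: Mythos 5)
Your high-level decomposition is sound: bounding the payoff by $(1-\delta)\sum_n\delta^{n-1}\E[u_\star(\textbf{p}_n)]$ and then controlling the discounted variation of the belief would indeed yield the proposition (and the Lipschitz step is valid, though unnecessary --- after the informative stages one can simply bound stage payoffs by $1$). The genuine gap is in your core step, where you assert that in the construction of Proposition \ref{prop_1} player 2's informative play is ``concentrated in a bounded number of early stages, with per-stage deviation of order $(1-\delta)c$.'' This misdescribes the construction and misses the actual mechanism. In $\tau_{p,q,\gamma}$, player 2's first randomization occurs at stage $N_1$ and moves player 1's belief by a \emph{macroscopic} amount (from $p$ to $p^0$ or $\tilde{p}^1$, a jump of order $1$, not of order $1-\delta$), and the subsequent periodic profile keeps randomizing with probabilities bounded away from $0$. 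What makes the proposition true is that the near-myopic target $\gamma^1\leq u_\star(p)+(1-\delta)c$ forces the delay $N_1$ to be long: with $\eta:=yu_\star(p^0)+(1-y)u_\star(p^1)-u_\star(p)>0$, the definition of $N_1$ gives $\eta\,\delta^{N_1-1}\leq(1-\delta)c$. Before stage $\min\{N_1,N_2\}$ both players repeat a constant action, so $\textbf{p}_n=p'$ there, and all later stages carry total discounted weight $\delta^{\min\{N_1,N_2\}}\leq\delta(1-\delta)c/\eta$; the bound follows with $C=c/\eta$. Without identifying this relation between the target payoff and the delay $N_1$, your claimed bound of $C_0(1-\delta)$ on the discounted belief variation is not established.

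Two secondary points. The measure change from $p'$ to $p$ is superfluous once the delay argument is in place: the belief simply does not move before stage $\min\{N_1,N_2\}$, whatever the prior in $\Delta^\dag(S\times M)$, so uniformity in $p'$ is automatic. And your concern about arbitrary strategies $\sigma$ is resolved by the same observation: player 2's play up to stage $\min\{N_1,N_2\}$ is deterministic, and any observable deviation by player 1 triggers a myopic, hence uninformative, continuation, so player 1 cannot induce player 2 to disclose more information than on the prescribed path.
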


In Propositions \ref{prop_1} and \ref{prop_2}, $\ep$, $C$ and $\bar\delta$ may depend a priori on the choice of $(p,q)$.
We will prove that they can be chosen in such a way that the conclusions hold uniformly throughout some neighborhoods of $p$ and $q$.

\subsection{Notations and Preliminaries}

We here describe the main steps leading to the proof of Propositions \ref{prop_1} and \ref{prop_2}.
Our goal is to mimic the recursive construction of the binary case.
We let  $[p^0,p^1]$ be any segment
in the interior of $\Delta^\dag(S\times M)$ such that $u_\star$ is not affine on the segment $[p^0,p^1]$.
The beliefs $p^0$ and $p^1$ take the role of $p$ and $1-p$ in the binary case.

An optimal action $a$ at $p^0$ is not optimal at $p^1$ (and
vice-versa). Otherwise, $a$ would be optimal
 throughout the segment $[p^0,p^1]$, and then $u_\star$ would coincide with the affine map $u(\cdot,a)$ on that segment.

 For $k=0,1$, we let $a^k\in A$ be an optimal action at $p^k$. We denote by $D^1$ the straight line spanned by $p^0$
 and $p^1$ in $\dR^{S\times M}$, and we denote by $\underline{p}$ and $\bar{p}$ the endpoints of
 the segment $D^1\cap \Delta^\dag(S\times M)$, with the convention of Figure \arabic{figurecounter}.

\begin{center}
\includegraphics{figure-straight-line.1}\\[0pt]
Figure \arabic{figurecounter}
\end{center}

\addtocounter{figurecounter}{1}

Let $\pi \in [p_0,\bar{p}]$, and assume that player 1 receives information that changes his belief from
$p^0$ to either $p^1$ (with probability $y$) or $\pi$ (with probability $1-y$).
So that the martingale property of beliefs holds we must have $p^0=y p^1+(1-y)\pi$.
Assume moreover that from the next stage on player 1 receives his myopically optimal payoff.
The gain of player 1 from the information that is revealed to him relative to his myopically optimal payoff at $p^0$, is then
$h_{p^0}(\pi)=\left(y
u_\star(p^1)+(1-y)u_\star (\pi)\right)-u_\star(p^0)$. Since $u_\star$ is convex,
$h_{p^0}(\pi)\geq 0$ for each $\pi$. Since $u_\star$ is not affine
on the interval $[p^0,p^1]$, one also has $h_{p^0}(\pi)>0$ for
$\pi\in (p^0,\bar{p}]$, see Figure \arabic{figurecounter}. In
addition, $h_{p^0}$ is piecewise affine, and non-decreasing as
$\pi$ moves away from $p^0$ towards $\bar p$.

\begin{center}
\includegraphics{figure-h.2}\\[0pt]
Figure \arabic{figurecounter}
\end{center}

Similarly, define $h_{p^1}:[p^1,\underline{p}]\to \dR^+$ by $h_{p^1}(\pi)=\left(yu_\star(p^0)+(1-y)u_\star(\pi)\right)-u_\star(p^1)$, where
$y$ solves $p^1=yp^0+(1-y)\pi$.

We proceed in a symmetric way with player 2. We let $[q^0,q^1]$ be
an arbitrary segment in the interior of $\Delta^\dag(T\times L)$
such that the restriction of $v_\star$ to the segment $[q^0,q^1]$
is not an affine map. We denote by $D^2$ the straight line in
$\dR^{T\times L}$ spanned by $q^0$ and $q^1$, and by
$\underline{q},\bar{q}$ the endpoints of the segment $D^2\cap
\Delta^\dag(T\times L)$. Finally, we define
$h_{q^0}:[q^0,\bar{q}]\to \dR^+$ and
$h_{q^1}:[q^1,\underline{q}]\to \dR^+$ by adapting the definitions
of $h_{p^0}$ and  $h_{p^1}$.

Given a belief $\pi\in \Delta(S\times M)$,  and an action $a\in A$, the \emph{cost} of $a$ at $\pi$ is defined as the
loss incurred when playing $a$ instead of the optimal action at $\pi$:
\[c(\pi,a):=u_\star(\pi)-u(\pi,a).\]
The cost $c(\pi,b)$, for $\pi\in \Delta(T\times L)$ and $b\in B$
is defined analogously.
\bigskip

The proof of Propositions \ref{prop_1} and \ref{prop_2} relies on Lemmas \ref{lemm_1} and \ref{lemm_2} below.

\begin{lemma}\label{lemm_1}
Let $\delta <1$ be such that $\displaystyle \frac{1-\delta}{\delta}c(p^i,a^j)<\max h_{p^i}$ and
$\displaystyle \frac{1-\delta}{\delta}c(q^i,b^j)<\max h_{q^i}$, for  $i, j=0,1$.
Then the vector $(u_\star(p^0),v_\star(q^0)+\frac{1-\delta}{\delta}c(q^0,b^1))$ is a sequential equilibrium payoff of $\Gamma(p^0,q^0)$.
\end{lemma}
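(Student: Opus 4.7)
The plan is to mimic, in the general self-ignorant setting, the four-periodic construction of Section \ref{sec periodic}. I build a strategy profile in which, along the ``suboptimal'' play path, one player randomizes per stage according to a strict alternation: at odd stages player 1 randomizes between $a^{0}$ and $a^{1}$ (contingent on $\textbf{l}$) while player 2 plays the action currently optimal at his belief, and at even stages the roles are reversed. If the randomizing player happens to play his currently optimal action, both switch to myopic play forever; otherwise the roles swap and play continues. Any observable deviation by the opponent triggers a permanent myopic continuation.

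The weights are calibrated so that the beliefs $\textbf{p}_n$ and $\textbf{q}_n$ always lie on the one-dimensional segments $D^1 \cap \Delta^\dag(S\times M)$ and $D^2 \cap \Delta^\dag(T\times L)$. Along the pure suboptimal cycle the pair of beliefs visits $(p^0, q^0) \to (p^0, q^1) \to (p^1, q^1) \to (p^1, q^0)$ and returns to $(p^0, q^0)$, while at each of the four exit nodes arising from an optimal play of the randomizer, exactly one belief jumps to an intermediate point $p^\star$, $p^{\star\star}$, $q^\star$, or $q^{\star\star}$ lying in the relative interior of the relevant half of $D^1$ or $D^2$. That player 1 can implement a conditional randomization $\sigma(\cdot\mid l)$ producing the desired marginal posterior for player 2, supported on two prescribed points of $D^2$, follows from the representation $\Delta^\dag(T\times L) = \conv\{q_l \otimes 1_l : l \in L\}$ and the fact that $[q^0, q^1]$, lying in the relative interior of this polytope, is spanned by directions reachable by reweighting types $l \in L$.

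The four intermediate beliefs, together with the exit probabilities $x^\star$, $y^\star$, and their companions, are pinned down by the martingale identity and by the indifference of each randomizing player between his two prescribed actions. A telescoping computation of the kind performed in Section \ref{sec periodic} reduces the indifference of player 1 at belief $p^0$ to a single scalar equation of the form $h_{p^0}(p^\star) = \alpha(\delta)\, c(p^0, a^1)$ with $\alpha(\delta) = O(1-\delta)$, and similarly for the three companion equations at $p^1$, $q^0$, $q^1$. Because $h_{p^i}$ is continuous, piecewise affine, vanishes at $p^i$, and attains the value $\max h_{p^i}$ at the relevant endpoint of $D^1$, the hypothesis $\frac{1-\delta}{\delta}\, c(p^i, a^j) < \max h_{p^i}$ guarantees, for $\delta$ close to $1$, an interior solution to each of the four equations.

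Sequential rationality is then immediate by the one-shot deviation principle: by construction each randomizer is indifferent between his two prescribed actions, and any third action $a \in A \setminus \{a^0, a^1\}$ yields stage payoff strictly below $u_\star(p^0)$ with no compensating informational gain, since any observable deviation triggers a permanent myopic continuation. Player 1's ex ante payoff telescopes to $u_\star(p^0)$ via the stage-1 indifference. Player 2's ex ante payoff equals $(1-\delta)v_\star(q^0) + \delta[(1-x^\star)v_\star(q^\star) + x^\star v_\star(q^1)]$, which by the martingale identity $q^0 = (1-x^\star)q^\star + x^\star q^1$ and the definition of $h_{q^0}$ reduces to $v_\star(q^0) + \delta\, h_{q^0}(q^\star)$, and in turn, by the indifference equation at the companion node, to $v_\star(q^0) + \frac{1-\delta}{\delta}\, c(q^0, b^1)$. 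The main obstacle is the bookkeeping of the four intermediate beliefs and the verification that the conditional randomizations over types $l \in L$ can be chosen to simultaneously implement the required marginals for the opponent while respecting the martingale property at every node of the cycle.
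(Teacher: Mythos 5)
Your construction is correct and follows essentially the same route as the paper: the same four-periodic alternating-randomization cycle with exit beliefs $\bar p^1,\bar p^0,\bar q^1,\bar q^0$ pinned down by equations of the form $h_{p^0}(\bar p^1)=O(1-\delta)\cdot c(p^0,a^1)$ (solvable on $D^1\cap \Delta^\dag(S\times M)$ precisely because of the hypothesis on $\max h_{p^i}$), signal-contingent weights of the type $y^1_m=\frac{p^1(m)}{p^0(m)}y^1$ implementing the required posteriors, and myopic continuation after observable deviations. The only cosmetic difference is that the paper packages the optimality verification as a dynamic-programming check on an eight-state automaton rather than invoking the one-shot deviation principle directly, but the content is identical.
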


\begin{lemma}\label{lemm_2}
Let $\ep>0$ be such that $\ep<\max h_{p^i}$, and $\ep<\max h_{q^j}$ for $i,j=0,1$. There is  $\bar\delta <1$, such that for every
discount factor $\delta\geq \bar{\delta}$, every payoff in $[u_\star(p^0),v_\star(q^0)+\ep]\times [v_\star(q^0),v_\star(q^0)+\ep]$
is a sequential equilibrium payoff of $\Gamma(p^0,q^0)$.
\end{lemma}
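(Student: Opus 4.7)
The plan is to extend the single equilibrium payoff vector obtained in Lemma \ref{lemm_1} into a two-dimensional range by prepending a two-stage information-exchange phase, exactly as was done in Section \ref{label further} for the binary case. Lemma \ref{lemm_1} will then be invoked as a black box to implement the continuation play starting from stage 3.

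First, I would fix auxiliary beliefs along the two chosen segments. On $[\underline{p},\bar{p}]\subset D^1\cap \Delta^\dag(S\times M)$, pick beliefs $p^{0,-}$ and $p^{0,+}$ flanking $p^0$, and beliefs $p^{1,-}$ and $p^{1,+}$ flanking $p^0$ (with $p^{1,-},p^{1,+}$ closer to $p^0$ than $p^{0,-},p^{0,+}$). Symmetrically, on $[\underline{q},\bar{q}]\subset D^2\cap \Delta^\dag(T\times L)$, pick $q^{-},q^{+}$ flanking $q^0$. The magnitude $\max h_{p^0}\wedge \max h_{q^0}\geq \ep$ gives the room to move these beliefs inside the relevant segments while keeping them in the interior of their respective simplices, uniformly for $\delta$ sufficiently close to $1$.

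Next, define the strategy profile. In stage 1, player 2 plays his optimal action $b^0$ at $q^0$; conditional on his signal $\textbf{l}$, player 1 randomizes between $a^0$ and $a^1$ in a way that, under Bayesian updating, moves player 2's posterior from $q^0$ to either $q^{-}$ (after $a^0$) or $q^{+}$ (after $a^1$). In stage 2, player 1 plays his optimal action at his current belief $p^0$ (his belief is unchanged since player 2 did not randomize), while player 2, conditional on $\textbf{m}$ and on the action $a^k$ played in stage 1, randomizes so that player 1's posterior moves from $p^0$ to either $p^{k,-}$ or $p^{k,+}$. From stage 3 on, players implement the equilibrium of $\Gamma(p^{k,\pm},q^{\pm})$ provided by Lemma \ref{lemm_1}, whose existence is guaranteed since $\ep<\max h_{p^i},\max h_{q^i}$ and $\delta$ is close to $1$. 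Any observable deviation triggers permanent myopic play.

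Then I would verify incentives. Player 2's indifference in stage 2 is automatic: by construction of Lemma \ref{lemm_1}, player 2's continuation payoff at any node on the segment $[q^0,q^1]$ equals $v_\star(\cdot)+\frac{1-\delta}{\delta}c(\cdot,b^1)$, and the extra $\frac{1-\delta}{\delta}c(q^0,b^1)$ bonus exactly offsets the cost $c(q^0,b^1)$ incurred by randomizing, regardless of the stage 1 action by player 1. Player 1's indifference in stage 1 is the substantive condition: the instantaneous payoff gap $(1-\delta)(u(p^0,a^0)-u(p^0,a^1))$ must be matched by a corresponding gap in the expected continuation payoffs at stage 3, which are convex combinations of $u_\star(p^{k,\pm})$. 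Following the pattern of equation (\ref{equilibrium1}), for any admissible $(p^{0,-},p^{0,+})$ the pair $(p^{1,-},p^{1,+})$ can be chosen strictly inside $(p^{0,-},p^{0,+})$ to realize this indifference, using the strict convexity of $u_\star$ on $[p^0,p^1]$ (which is exactly the content of $\max h_{p^0}>0$).

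Finally, by the martingale law of iterated expectations, the ex ante payoff to player 1 is $(1-\delta^2)$ times a bounded stage-$1,2$ contribution plus $\delta^2$ times the convex combination $y u_\star(p^{0,+})+(1-y)u_\star(p^{0,-})$, with $y$ determined by $p^0 = yp^{0,+}+(1-y)p^{0,-}$. As $(p^{0,-},p^{0,+})$ ranges over admissible pairs and $\delta\to 1$, this quantity spans $[u_\star(p^0),u_\star(p^0)+\ep]$; the analogous statement holds for player 2, and the two parameters are independent, so the whole box $[u_\star(p^0),u_\star(p^0)+\ep]\times [v_\star(q^0),v_\star(q^0)+\ep]$ is attained. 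The main obstacle is ensuring simultaneously that all probabilities remain in $[0,1]$, that all intermediate beliefs remain in $\overset{\circ}{\Delta}^\dag(S\times M)$ and $\overset{\circ}{\Delta}^\dag(T\times L)$ (so the continuation equilibria of Lemma \ref{lemm_1} are well defined), and that the indifference conditions can be met; all three requirements are controlled uniformly by the strict convexity of $u_\star$ and $v_\star$ on the chosen segments combined with taking $\delta$ close enough to $1$.
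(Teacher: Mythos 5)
Your overall architecture is the paper's: prepend a two-stage revelation phase to the periodic continuation of Lemma \ref{lemm_1}, mirroring the binary construction of Section \ref{label further}, with player 1 randomizing in stage 1 (splitting player 2's belief), player 2 randomizing in stage 2 with an informativeness that depends on player 1's stage-1 action, and the free parameters $(p^{k,\pm},q^{\pm})$ tuned to hit any target in the box. The spanning argument and player 1's stage-1 indifference (adjusting the posteriors after one action to lie inside those after the other) are handled as in the paper.

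There is, however, a genuine gap in your verification of player 2's stage-2 indifference. You prescribe the Lemma \ref{lemm_1} continuation at \emph{all four} terminal nodes and claim indifference is ``automatic'' because the bonus $\frac{1-\delta}{\delta}c(\cdot,b^1)$ offsets the cost of randomizing. But the two stage-2 actions player 2 mixes over lead to terminal nodes with the \emph{same} belief $q^{\pm}$ for player 2 (they differ only in player 1's posterior), so under your prescription both actions are followed by the same bonus-carrying continuation payoff $v_\star(q^{\pm})+\frac{1-\delta}{\delta}c(q^{\pm},b^1)$. Computing: the myopically optimal stage-2 action then yields $(1-\delta)v_\star(q^{\pm})+\delta\bigl(v_\star(q^{\pm})+\frac{1-\delta}{\delta}c\bigr)=v_\star(q^{\pm})+(1-\delta)c$, while the suboptimal one yields exactly $v_\star(q^{\pm})$; player 2 strictly prefers his optimal action and the mixing collapses. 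Indifference requires the continuation payoffs following the two realizations to \emph{differ} by $\frac{1-\delta}{\delta}$ times the stage-cost gap. This is precisely how the paper resolves it: after player 2's optimal stage-2 action the players switch to \emph{myopic play forever} (continuation payoff exactly $v_\star(q^{\pm})$, no bonus), and only after his suboptimal action do they enter the Lemma \ref{lemm_1} profile with payoff $v_\star(q^{\pm})+\frac{1-\delta}{\delta}c(q^{\pm},\cdot)$, so that $(1-\delta)(v_\star-c)+\delta\bigl(v_\star+\frac{1-\delta}{\delta}c\bigr)=v_\star$ holds realization by realization. (This is also the content of the binary identity $v_\star(\tilde q)=(1-\delta)(1-v_\star(\tilde q))+\delta f(\tilde q)$ you cite: the left side is the \emph{myopic} continuation after the optimal action, not another bonus continuation.) Once you replace the continuation at the ``player 2 played optimally'' nodes by myopic play, the rest of your argument goes through.
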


We will prove that the conclusion holds uniformly for all initial distributions $\tilde p^i$, $\tilde q^j$ close to $p^i$ and $q^j$.
To be precise, there exist a neighborhood $V(p^i) $ of $p^i$, and a neighborhood $V(q^j)$ of $q^j$ ($i,j\in\{0,1\}$) such that, for every $\delta\geq \bar\delta$,
and every $\tilde p^i\in V(p^i),\tilde q^j\in V(q^j)$, all vectors in $[u_\star(\tilde p^i), u_\star(\tilde p^i)+\ep]\times
[v_\star(\tilde q^j),v_\star(\tilde q^j)+\ep]$ are sequential equilibrium payoffs of $\Gamma(\tilde p^i,\tilde q^j)$.

\subsection{Proof of Lemma \ref{lemm_1}}

In the construction of Section \ref{section example}, the probabilities $x$ and $y$ assigned to suboptimal actions were pinned down by equilibrium requirements. The construction here is slightly more involved
because the number of signals may be larger than 2.

We let $\delta$ be as stated.
Define first $\bar{p}^1\in [p^0,\bar{p})$  by the condition $\displaystyle h_{p^0}(\bar{p}^1)=\frac{1-\delta}{\delta}c(p^0,a^1)$, and $y^1\in [0,1)$
by the equality $p^0=y^1 p^1+(1-y^1)\bar{p}^1$.
The revelation of information just defined offsets the cost to player 1 of playing
the suboptimal action $a^1$ when the belief if $p^0$.
$\bar p^1$ takes the role of $p^\star$ in the binary case.
For $m\in M$, we set $y^1_m=\displaystyle \frac{p^1(m)}{p^0(m)}y^1$.
Because $p^0$ is in the (relative) interior of $\Delta^\dag(S\times M)$, one has $p^0(m)>0$ for each $m$, and $y^1_m\in (0,1)$.
Observe that
$y^1=\sum_{m\in M}p^0(m)y^1_m$, and that the following Bayesian updating property holds. If player 1's belief is $p^0$, and if player 2 plays two different actions $b$ and $b'$ with respective probabilities $y^1_{\textbf{m}}$ and $1-y^1_{\textbf{m}}$, then following $b$ the posterior belief of player 1 is equal to
$p ^1$, and it is equal to $\bar{p}^1$ following $b'$.

Similarly, we let $\bar{p}^0\in (p^1,\underline{p})$ be defined by $h_{p^1}(\bar{p}^0)=\frac{1-\delta}{\delta}c(p^1,a^0)$,
and we set $y^0_m=\displaystyle \frac{p^0(m)}{p^1(m)}y^0$ for $m \in M$, where  $y^0$
solves $p^1=y^0 p^0+(1-y^0)\bar{p}^0$.

We next exchange the roles of the two players, and proceed in  a slightly asymmetric way. We let $\bar{q}^1\in (q^0,\bar{q})$ be defined by
$h_{q^0}(\bar{q}^1)=\displaystyle \frac{1-\delta}{\delta}c(q^0,b^1)$, we let $x^0$ be defined by $q^0=x^0q^1+(1-x^0)\bar{q}^1$, and we set
$x^0_l=\displaystyle \frac{p^1(l)}{p^0(l)}x^0$ for $l\in L$.

We finally define $\bar{q}^0\in (q^1,\underline{q})$, $x^1\in (0,1)$, and $x^1_l=\displaystyle\frac{q^0(l)}{q^1(l)}x^0$ for $l\in L$ in a similar way.

\bigskip

We are now in a position to define strategies $\sigma_\star$ and $\tau_\star$. As long as players alternate in playing their
suboptimal action, player 1 (resp., player 2)  randomizes in each \emph{odd} (resp., in each \emph{even}) stage, and beliefs evolve cyclically:
\[p^0,q^0 \to p^0,q^1 \to p^1,q^1\to p^1,q^0\to p^0,q^0\to \cdots\]
Along this cycle, player 1 assigns a probability  $x^1_\textbf{l}$
to his suboptimal action, $a^1$, when player 2's belief is $q^0$,
and a probability  $x^0_\textbf{l}$ to his suboptimal action,
$a^0$, when player 2's belief is $q^1$. Analog properties hold for
player 2. This is summarized in Figure
\addtocounter{figurecounter}{1} \arabic{figurecounter} below.
\[
\begin{array}{c||c|c|c|c}
\hbox{Stage} & \hbox{player 1} & \hbox{player 2} & \hbox{belief} & \hbox{Suboptimal action}\\
\hline
\hline
1 \mod\ 4 & [x^1_{\textbf{l}}(a^1),(1-x^1_{\textbf{l}})(a^0)] & b^0 & p^0,q^0 & a^1\\
2 \mod\ 4 & a^0 & [y^1_{\textbf{m}}(b^0),(1-y^1_{\textbf{m}})(b^1)] & p^0,q^1 & b^0\\
3 \mod\ 4 & [x^0_{\textbf{l}}(a^0),(1-x^0_{\textbf{l}})(a^1)] & b^1 & p^1,q^1 & a^0\\
0 \mod\ 4 & a^1 & [y^0_{\textbf{m}}(b^1),(1-y^0_{\textbf{m}})(b^0)] & p^1,q^0 & b^1
\end{array}
\]
\centerline{Figure \arabic{figurecounter}: the first phase of play: information exchange.}
\setcounter{figurecounter1}{\value{figurecounter}}

\addtocounter{figurecounter}{1}
As soon as either player 1 plays his optimal action in some odd stage, or player 2 plays his optimal action in some even stage,
the players switch to myopic play forever, as described in columns 3 and 4 in Figure \arabic{figurecounter}.
Here and later, $o(p)$ (resp. $o(q)$) stands for an optimal action of player $1$ at $p$ (resp., of player 2 at $q$).

\[
\begin{array}{c||c|c|c}
\hbox{First stage in which} \\
\hbox{myopically optimal action is played} & \hbox{new belief} & \hbox{player 1} & \hbox{player 2} \\
\hline
\hline
1 \mod\ 4 & p^0,\bar q^1 & a^0& o(\bar q^1) \\
2 \mod\ 4 & \bar p^1,q^1 & o(\bar p^1) & b^1 \\
3 \mod\ 4 & p^1,\bar q^0 & a^1 & o(\bar q^0) \\
0 \mod\ 4 & \bar p^0,q^0 & o(\bar p^0) & b^0
\end{array}
\]
\centerline{Figure \arabic{figurecounter}: the second phase of play: myopic play.}
\setcounter{figurecounter2}{\value{figurecounter}}

We complete the definition of $(\sigma_\star,\tau_\star)$ by specifying actions and beliefs at information sets that are
ruled out by $(\sigma_\star,\tau_\star)$.
For concreteness, we focus on player 1.
An information set of player 1 contains all histories of the form $(l,h)$,  for a fixed signal $l\in L$,
and a fixed sequence $h\in H$ of moves.
Fix an information set that is reached with probability 0 under $(\sigma_\star,\tau_\star)$. We denote it by $I^1_{l,h}$, with $h\in H$. Write $h=(h',(\bar a,\bar b))$,
so that $h'$ is the longest prefix of $h$, and assume that $I^1_{l,h'}$ is reached with positive probability.

We distinguish two cases. Assume first that the action $\bar b$ has probability zero conditional on $h'$. This is the case
where player 2 deviates in an observable way at $h'$.
We let the belief of player 1 at $I^1_{l,h}$ be equal to the belief held at $I^1_{l,h'}$ -- the deviation by player 2 is
interpreted as being non-informative about $\textbf{m}. $
Assume now  that $\bar b$ is played with positive probability at $h'$. In that case, the belief of player 1 at $I^1_{l,h}$ can  be
computed by Bayes' rule, from the belief held at $I^1_{l,h'}$.

In both cases, we let the belief at all subsequent information sets be equal to the belief at $I^1_{l,h}$,
and we let $\sigma_\star$ repeat forever any  action that is optimal at $I^1_{l,h}$.

Observe that, following any history  in $I^1_{l,h}$, under $\tau_\star$ player 2 repeats forever the same action.\footnote{To be precise,
player 2 plays the same action at $I^1_{m,h}$ and in all subsequent information sets.} Indeed, either the sequence $h$ of actions has probability 0,
or it has positive probability. In the former case, the claim follows from the definition of $\tau_\star$ at zero probability information sets.
In the latter case, this implies that the information set $I^1_{l',h}$ has positive probability, for some $l'\neq l$. Since the support
of player 1's mixed actions in the information phase does not depend on his signal, this implies that $I^1_{l,h}$ must belong to the myopic play phase.
Using this observation,
one can check that beliefs are consistent with the strategy profile $(\sigma_\star,\tau_\star)$. We omit the proof.

Note that the strategy $\sigma_\star$ is sequentially rational at any  $I^1_{l,h}$ that is reached with probability 0.
Indeed, since the belief of player 1 is the same at $I^1_{l,h}$ and at all subsequent information sets, it is a best reply to repeat
any action that is optimal at $I^1_{l,h}$.

\begin{lemma}
The profile $(\sigma_\star,\tau_\star)$ is a sequential equilibrium of $%
\Gamma(p^0,q^0)$, with payoff $(u_\star(p^0), v_\star(q^0)+\frac{1-\delta}{%
\delta}c(q^0,b^1))$.
\end{lemma}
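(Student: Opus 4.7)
The plan is to verify two claims in turn: that $(\sigma_\star,\tau_\star)$ produces the announced payoff vector, and that it is sequentially rational. I work through the four-stage cycle inductively, using the prescribed indifferences as the main engine.

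\textit{Payoff computation via indifference.} Consider any cycle node at which player $1$ is supposed to randomize, with current cycle belief $(p^i,q^j)$. The optimal action $a^i$ terminates the exchange and produces myopic payoff $u_\star(p^i)$ forever. The suboptimal action $a^{1-i}$ produces stage payoff $u_\star(p^i)-c(p^i,a^{1-i})$; then in the next stage player $1$ is forced to play his optimal action (since player $2$'s preceding action was deterministic, the belief is still $p^i$), yielding $u_\star(p^i)$; and from two stages later his continuation is $y^{1-i}u_\star(p^{1-i})+(1-y^{1-i})u_\star(\bar p^{1-i})$, the first term by the inductive hypothesis at the cycle position $(p^{1-i},q^j)$, the second by the myopic value at $\bar p^{1-i}$. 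The martingale identity $p^i=y^{1-i}p^{1-i}+(1-y^{1-i})\bar p^{1-i}$ rewrites that bracket as $u_\star(p^i)+h_{p^i}(\bar p^{1-i})$, so that the defining equation for $\bar p^{1-i}$ equates the two overall payoffs and establishes indifference, pinning down player $1$'s continuation at $u_\star(p^i)$. Symmetric reasoning does the same for player $2$ at each of his randomizing nodes, with value $v_\star(q^j)$. In particular, player $1$'s payoff at $(p^0,q^0)$ is $u_\star(p^0)$. Player $2$ at stage $1$ plays $b^0$ for a stage payoff $v_\star(q^0)$ and, from stage $2$ on, reaches the randomizing node $(p^0,q^1)$ with probability $x^1$ (continuation $v_\star(q^1)$) or the myopic belief $\bar q^1$ with probability $1-x^1$ (continuation $v_\star(\bar q^1)$). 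Using the martingale identity for $q^0$ and the defining equation for $\bar q^1$, the total collapses to $v_\star(q^0)+\frac{1-\delta}{\delta}c(q^0,b^1)$.

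\textit{Sequential rationality.} At any on-path randomizing node, the indifference above is precisely the best-response condition for the two actions in the support; any third action would trigger the same myopic continuation but yield a weakly lower stage payoff, hence is weakly dominated. At the on-path non-randomizing node (stage $1$ for player $2$), any $b\neq b^0$ is observable; per the construction player $1$ ignores the deviation, freezes his belief at $p^0$, and plays $a^0$ forever, so player $2$'s best continuation after deviating is his own myopic action, capping his total payoff at $v_\star(q^0)$, which is strictly less than the on-path value. At off-path information sets the belief is specified (by Bayes' rule or frozen at the last on-path value) and the prescribed action simply repeats an action optimal at this frozen belief forever; this is trivially a best reply because neither belief nor opponent's play evolves thereafter. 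Consistency of beliefs at zero-probability information sets follows from a standard trembling-hand perturbation in which the perturbed strategies of the randomizing player put a vanishing mass on every action in $A$ (resp.\ $B$), so that the prescribed posterior is a limit of Bayes-updated posteriors.

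\textit{Principal difficulty.} The subtle point is that indifference for a randomizing player must hold \emph{conditional on his private signal}, not merely marginally. Assumption A.2 resolves this in the self-ignorant case: $\textbf{l}=\textbf{l}_T$ is independent of $(\textbf{s},\textbf{m})$, and analogously for $\textbf{m}$, so player $1$'s interim belief on $(\textbf{s},\textbf{m})$ is independent of $l$, and so is his continuation payoff under either action. The marginal indifference above is therefore automatically interim indifference for every realization of the private signal, and the signal-dependent weights $x^i_l,y^j_m$ are consistent with optimality while simultaneously producing the Bayesian posteriors $p^i,\bar p^i,q^j,\bar q^j$ required to sustain the cycle.
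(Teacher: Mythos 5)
Your argument is sound in substance and reaches the same verification the paper performs, but it packages it differently. The paper observes that $\sigma_\star$ and $\tau_\star$ are $8$-state automata whose transitions depend only on the public history, that each player's belief depends only on the current automaton state, and then invokes the dynamic-programming principle: it writes down the candidate stationary value function $V(\omega)$ on the eight states and checks the one-shot optimality equation (\ref{dyn-prog}). You instead run a node-by-node indifference computation around the cycle. The one place where your version is strictly weaker is the step ``the first term by the inductive hypothesis at the cycle position $(p^{1-i},q^j)$'': the cycle has no terminal node, so this ``induction'' has no base case and is, as written, circular. The repair is exactly the paper's move -- posit the stationary values on the eight automaton states and verify that they satisfy the Bellman equation, which by the contraction property of discounted dynamic programming simultaneously identifies them as the true continuation values and rules out all (not just one-shot) deviations. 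On the other hand, you make explicit two points the paper leaves implicit or omits: the interim-versus-marginal indifference issue (the weights $x^i_l$, $y^j_m$ depend on the private signal, so indifference must hold conditional on it; your appeal to \textbf{A.2} is the right justification), and the treatment of observable deviations, off-path beliefs, and consistency.

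One concrete numerical point you should not let pass: with the Appendix's definition $h_{p^0}(\bar p^1)=\frac{1-\delta}{\delta}c(p^0,a^1)$, your own computation does \emph{not} close. The cost is incurred at an odd stage and the compensating information arrives two stages later, so equating the two overall payoffs requires $\delta^2 h_{p^0}(\bar p^1)=(1-\delta)c(p^0,a^1)$, i.e.\ $h=\frac{1-\delta}{\delta^2}c$ -- which is what the binary construction of Section \ref{sec periodic} uses ($x=\frac{1-\delta}{\delta^2}$), and which is also what makes player 2's total equal $v_\star(q^0)+\delta h_{q^0}(\bar q^1)=v_\star(q^0)+\frac{1-\delta}{\delta}c(q^0,b^1)$ as the lemma asserts. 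Your claims that ``the defining equation \ldots\ equates the two overall payoffs'' and that the total ``collapses to'' the stated value are each off by a factor of $\delta$ if the Appendix's constant is taken literally; carrying your computation through to the end would have exposed this (apparent) typo rather than inheriting it.
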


We will use this lemma for various distributions $p^0,q^0$. To
avoid confusion, we will then denote the profile
$(\sigma_\star^{p^0,q^0},\tau_\star^{p^0,q^0})$.

\bigskip

\begin{proof}
Each of the strategies $\sigma_\star$ and $\tau_\star$ can be described by an automaton with 8 states:
four states that implement the periodic play in Figure \arabic{figurecounter1},
and four states that implement the myopic play in Figure \arabic{figurecounter2}.

In addition, transitions between (automaton) states are deterministic and depend only on the public history of moves.
Hence, player $i$ can always compute the current state of player $j$'s automaton. Moreover, as can be verified inductively,
the belief of player $i$ following any public history $h$ of moves only depends on the current state of player $j$'s automaton.

It follows that player $i$ has a best response that can be implemented by an automaton that has the same (or smaller) number of states as
the automaton of player $j$.
The dynamic programming principle may be used to identify such a best response.
Using this principle, it is routine to verify that $\tau_\star$ is a best response against $\sigma_\star$, and vice versa.
Indeed, denoting the 8 states of the automata by
$\Omega = {\large\{}(1,\mathtt{periodic})$, $(2,\mathtt{periodic})$, $(3,\mathtt{periodic})$, $(0,\mathtt{periodic})$,
$(1,\mathtt{myopic})$, $(2,\mathtt{myopic})$, $(3,\mathtt{myopic})$, $(0,\mathtt{myopic}){\large\}}$,
the expected payoff to player 2 starting at any given $\omega \in \Omega$ is:
\[
\begin{array}{lll}
V(1,\mathtt{periodic})=v_\star(q^0)+\frac{1-\delta}{\delta}c(q^0,b^1); & & V(1,\mathtt{myopic})=v_\star(q^0);\\
V(2,\mathtt{periodic})=v_\star(q^1); & & V(2,\mathtt{myopic})=v_\star(\bar{q}^1);\\
V(3,\mathtt{periodic})=v_\star(q^\ast)+\frac{1-\delta}{\delta}c(q^1,b^0); & & V(3,\mathtt{myopic})=v_\star(q^1);\\
V(0, \mathtt{periodic})=v_\star(q^0); & \ \ \ \ \ \ \ \ \ & V(0,\mathtt{myopic})=v_\star(\bar{q}^0).
\end{array}
\]
One may verify that for every $\omega\in \Omega$, $V$ solves
\begin{equation}  \label{dyn-prog}
V(\omega)=\max_{b\in
B}\left\{(1-\delta)r(\omega,b)+\delta\sum_{\omega^{\prime}\in
\Omega}V(\omega,b)[\omega^{\prime}]\right\};
\end{equation}
here $r(\omega,b)$ stands for the expected payoff of player 2 when playing $b$ in the (automaton) state $\omega$, where the expectation is taken
w.r.t. the belief held at state $\omega$.
\end{proof}

\subsection{Proof of Lemma \ref{lemm_2}}

Let a payoff vector $\gamma\in [u_\star(p^0),u_\star(p^0)+\ep]\times [v_\star(q^0),v_\star(q^0)+\ep]$ be given.
For $\delta$ high enough, we will define a sequential equilibrium profile in $\Gamma(p^0,q^0)$ with payoff $\gamma$, using the ideas
in Section \ref{label further}. We need some preparations.

Define $\gamma_s^1$ and $\gamma_o^1$ by the equations
\begin{eqnarray*}
\gamma^1&=& (1-\delta)u_\star(p^0)+ \delta (1-\delta) u_\star(p^0)+\delta^2 \gamma_o^1,\\
\gamma^1&=& (1-\delta) u(p^0,a^1) +\delta(1-\delta)u_\star(p^0)+\delta^2\gamma_s^1.
\end{eqnarray*}
$\gamma_s^1$ (resp. $\gamma_o^1$) are the continuation payoffs of player 1 at stage 2, which ensure that the expected payoff of player 1 is $\gamma^1$, if player 1 plays the myopically suboptimal (resp. optimal) action at stage 1,
and the myopically optimal action\footnote{The letters $s,o$ remind that
$\gamma^1_o$ and $\gamma^1_s$ are continuation payoffs following an optimal and a suboptimal action respectively.}
at stage 2.

Define $\gamma^2_o$ be the equality
\[\gamma^2=(1-\delta)v_\star(q^0)+\delta \gamma^2_o.\]
Because $\gamma^1 > u_\star(p^0) > u(p^0,a^1)$ and
$\gamma^2 > v_\star(q^0)$ it follows that
$\gamma_s^1> \gamma_o^1\geq u_\star(p^0)$ while $\gamma^2_o\geq v_\star(q^0)$.
For $\delta$ high enough, and by definition of $\ep$, one has
\[\gamma_s^1<  u_\star(p^0)+ \max h_{p_0}\mbox{ and } \gamma^2_o<v_\star(q^0)+\max h_{q^0}.\]
Hence, there exist $p_s,p_o\in [p^0,\bar p)$, and $q_o\in [q^0,\bar{q})$ such that
\begin{eqnarray*}
h_{p^0}(p_o)&=&\gamma_o^1-u_\star(p^0),\\
h_{p^0}(p_s)&=&\gamma_s^1-u_\star(p^0),\\
h_{q^0}(q_o)&=&\gamma_o^1-v_\star(q^0).
\end{eqnarray*}
Mimicking  the previous section, we define
\begin{itemize}
\item $y_{o,m}=\displaystyle\frac{p^1(m)}{p^0(m)}y_o$, for $m\in M$, where $y_o$ solves
$p^0=y_op^1+(1-y_o)p_o$.
\item $y_{s,m}=\displaystyle\frac{p^1(m)}{p^0(m)}y_s$ ($m\in M$), where $y_s$ solves
$p^0=y_sp^1+(1-y_s)p_s$.
\item $x_l=\displaystyle \frac{q^1(m)}{q^0(m)} x$ for $l\in L$, where
$x$ solves $q^0=x q^1+(1-x)q_o$.
\end{itemize}
\bigskip

\addtocounter{figurecounter}{1}
We are now in a position to define a profile as follows (see also Figure \arabic{figurecounter}).
\begin{description}
\item[Stage 1:] Player 2 plays $b^0$, while player 1 plays the two actions $a^1$ and $a^0$ with probabilities
$x_\textbf{l}$ and $1-x_\textbf{l}$. By Bayesian updating, following $a^1$ the belief of player 2 in stage 2 is equal to $q^1$, and it is equal to $q_o$ following $a^0$ (while the belief of player 1 is still $p^0$).

\item[Stage 2:] Player 2 randomizes. Following $a^1$, player 2 plays the two actions $b^0$ and $b^1$ with probabilities $y_{s,\textbf{m}}$
and $1-y_{s,\textbf{m}}$ respectively. Following $a^0$, he plays the two actions $b^1$ and $o(q_o)$ with probabilities $y_{o,\textbf{m}}$
and $1-y_{o,\textbf{m}}$ respectively. Meanwhile, player 1 plays  $a^0$. By Bayesian updating, the belief of player 1
is equal to (i) $p^1$ following either $(a^0,b^1)$ or $(a^1,b^0)$, (ii) to  $p_s$ following $(a^0,o(q_o))$ and
(iii) to $p_o$
following $(a^1,b^1)$.
\item[Stage 3 and on:] If player 2 played his optimal action in stage 2, players repeat their optimal action.
The continuation payoff is then  $\left(u_\star(p^1),v_\star(q^1)\right)$ following $(a^1,b^1)$ and is
$\left(u_\star(p^1_s),v_\star(q_o)\right)$ following $(a^0,o(q_o))$. Assume now that player 2 played  $b^1$ in stage 2,
following $a^0$. Beliefs are then $(p_o^1,q_c)$ and
    players switch to the equilibrium profile $(\sigma_\star^{p_o^1,q_o},\tau_\star^{p_o^1,q_o})$ of $\Gamma(p_o^1,q_o)$,
    with payoff $\left(u_\star(p_o^1), v_\star(q_o)+\frac{1-\delta}{\delta}c(q_o,b^1)\right)$. Finally, assume that player 2
    played $b^0$ in stage 2, following $a^1$. Beliefs are then $(p^1,q^1)$,
    and players switch to the profile $(\sigma_\star^{p^1,q^1},\tau_\star^{p^1,q^1})$.
\end{description}

\begin{center}
\label{figure-evol-belief2}
\includegraphics{figure-evol-belief.2}\\[0pt]
Figure \arabic{figurecounter}: The evolution of beliefs and of continuation payoffs.
\end{center}

Beliefs and actions at information sets that are ruled by this description are defined as in the proof of Lemma \ref{lemm_1}.
The equations defining $\gamma_s^1,\gamma_o^1$ (resp. $\gamma^2_o$) ensure that player 1 is indifferent in stage 1 (resp. player 2 in stage 2) between
the two actions that are assigned positive probability. This implies the equilibrium property.
Details are standard and omitted.

\subsection{Proofs of Propositions \ref{prop_1} and \ref{prop_2}}

We start with the proof of Proposition \ref{prop_1}. The construction we provide here is more complex than needed for Proposition \ref{prop_1}. However, it will facilitate
the proof of Proposition \ref{prop_2}. We let initial distributions $p$ and $q$ be given, in the interiors of
$\Delta^\dag(S\times M)$ and $\Delta^\dag(T\times  L)$. Choose a segment $[p^0,p^1]$ included in the interior
of $\Delta^\dag(S\times M)$, such that (i) $u_\star$ is not affine on $[p^0,p^1]$, and (ii) $p\in (p^0,p^1)$.

By (i) and (ii), one has $u_\star(p)< y u_\star(p^0) +(1-y) u_\star(p^1)$, where $y$ solves $yp^0+(1-y)p^1=p$.
Observe also that the quantity $\tilde y u_\star(p^0) +(1-\tilde y) u_\star(\tilde p^1)$ (with $\tilde y p^0+(1-\tilde y)\tilde p^1=p$) is
strictly decreasing in the neighborhood of
$p^1$, as $\tilde p^1\in [p^0,p^1]$ moves away from $p^1$ and towards $p^0$.

By Lemma \ref{lemm_2}, there exists  $\ep_0>0$,  $\bar \delta <1$,
and neighborhoods $V(p^i)$ and $V(q^j)$ of $p^i$ and $q^j$
($i,j\in\{0,1\}$), such that any payoff in $[u_\star(\tilde
p^i),u_\star(\tilde p^i)+\ep_0]\times [v_\star(\tilde
q^j),v_\star(\tilde q^i)+\ep_0]$ is a sequential equilibrium
payoff of the game $\Gamma (\tilde p^i,\tilde q^j)$, as soon as
$\delta \geq \bar\delta$.

We now prove that the conclusion of Proposition \ref{prop_1} holds with $\ep=\ep_0$. Let $\gamma\in [u_\star(p),u_\star(p)+\ep_0]
\times [v_\star(q),v_\star(q)+\ep_0]$ be given. We describe an equilibrium profile that implements $\gamma$.

One main feature of this profile is the following. As a result of information disclosure by player 2, player 1's belief will move in
\emph{one} stage from $p$ to a belief $\tilde p^i$ close to either $p^0$ or $p^1$. Similarly, player 2's belief will change to a belief
$\tilde q^j$ close to either $q^0$ or $q^1$
in exactly one stage. From that    point on, players implement an equilibrium of $\Gamma(\tilde p^i,\tilde q^j)$ with the
appropriate payoff. There is however one minor difference with previously defined equilibria.
If $u_\star(p) < \gamma^1 < yu_\star(p^0)+(1-y)u_\star(p^1)$,
then the expected payoff of player 1 if we follow the previous construction will be higher than $\gamma^1$, which is the target payoff.
There are two ways to overcome this difficulty.
One way is to choose in this case $p^0$ and $p^1$ which are closer to $p$, thereby lowering
the expected continuation payoff $yu_\star(p^0)+(1-y)u_\star(p^1)$.
A second way, which we adopt here, is to delay information revelation,
so that the discounted payoff is lower than $yu_\star(p^0)+(1-y)u_\star(p^1)$.

\bigskip

Define $N_1\geq 1$ to be the least integer\footnote{$N_1 = \infty$ if $\gamma^1 = u_\star(p)$.} such that $\gamma^1_c\geq y u_\star(p^0)+(1-y)u_\star(p^1)$, where
$\gamma_c^1$ is defined by $\gamma^1=(1-\delta^{N_1})u_\star(p)+\delta^{N_1}\gamma^1_c$.
The inequality $\gamma^1_c\geq y u_\star(p^0)+(1-y)u_\star(p^1)$ ensures that
if player 2 starts revealing information at stage $N_1$,
then one can support $\gamma^1_c$ as a continuation payoff of player 1 at that stage.
Define $N_2$ in a similar way for player 2, and assume w.l.o.g.
that $N_1\leq N_2$. Information is first disclosed at stage $N_1$.
The choice of $N_1$ implies
\[\gamma^1_c -\left(y u_\star(p^0)+(1-y)u_\star(p^1) \right)\leq \frac{1-\delta}{\delta} \left(
y u_\star(p^0)+(1-y)u_\star(p^1) -u_\star(p) \right),\]
provided $\delta$ is high enough.

This implies that for $\delta$ high enough, there is $\tilde{p}^1\in V(p^1)\cap [p^0,p^1]$ such that $\gamma^1_c=
\tilde y u_\star(p^0)+(1-\tilde y) u_\star(\tilde p^1)$, and $\tilde y p^0+(1-\tilde y)\tilde p^1=p$.

\bigskip

We first define a strategy pair $(\sigma,\tau)$ up to stage $N_1+1$. Player 1 repeats an optimal action $o(p)$ at all stages $1,\ldots,N_1$. Player
2 plays $o(q)$ at all stages $1,\ldots, N_1-1$. In stage $N_1$, player 2
plays both actions $o(q)$ and $b'\neq o(q)$ with probabilities such that
 beliefs in stage $N_1+1$ are  $(p^0,q)$ following $b'$, and $(\tilde p^1,q)$ following $o(q)$.

We now define the continuation of $(\sigma,\tau)$ following $o(q)$.
Define $\gamma_c^2$ by the equality $\gamma^2=(1-\delta^{N_1})v_\star(q)+\delta^{N_1}\gamma_c^2$.
The continuation of $(\sigma,\tau)$ in the other case is defined in an analog way,
except that $\gamma^2_c$ has to be replaced by $\displaystyle \gamma^2_c+\frac{1-\delta}{\delta}c(q,b')$,
and the equations that describe equilibrium constraints have to be adjusted.

Let $\tilde N_2$ be the least integer (possibly infinite)
such that $\tilde \gamma^2\geq x v_\star(q^0)+(1-x) v_\star(q^1)$, where $\tilde \gamma^2$ is defined by
$\gamma_c^2=(1-\delta^{\tilde N_2})v_\star(q)+\delta^{\tilde{N}_2} \tilde \gamma^2$.
The choice of $\tilde N_2$ implies
\[\tilde\gamma^2 -\left(x v_\star(q^0)+(1-x)v_\star(q^1) \right)\leq \frac{1-\delta}{\delta} \left(
x u_\star(q^0)+(1-x)u_\star(q^1) -v_\star(q) \right),\]
provided $\delta$ is high enough.
This implies that for $\delta$ high enough, there is $\tilde{q}^1\in V(q^1)\cap [q^0,q^1]$ such that $\tilde \gamma^2=
\tilde x v_\star(q^0)+(1-\tilde x) v_\star(\tilde q^1)$, and $\tilde x q^0+(1-\tilde x)\tilde q^1=q$.

The continuation profile is defined as follows. Player 2 repeats $o(q)$ in all stages $N_1+1,\ldots,N_1+\tilde{N}_2$. Player 1
repeats $o(\tilde p^1)$ in all stages $N_1,\ldots, N_1+\ldots \tilde N_2-1$. In stage $N_1+\tilde N_2$, player 1 plays both actions $o(\tilde p^1)$ and $a\neq o(\tilde p^1)$
with probabilities such that the belief of player 2 is equal to $\tilde q^1$ following $o(\tilde p^1)$, and to $q^0$ following $a$.

Following  $o(\tilde p^1)$, players switch to an equilibrium of the game $\Gamma(\tilde{p}^1,\tilde q^1)$ with payoff
$(u_\star(\tilde p^1),v_\star(\tilde q^1))$. Following  $a$, players switch to an equilibrium of the game $\Gamma (\tilde{p}^1,q^0)$
with payoff $(u_\star(\tilde p^1)+\frac{1-\delta}{\delta}c(\tilde p^1,a),v_\star(q^0)).$

\bigskip
Beliefs and actions off-the-equilibrium-path are defined as in the proof of Lemma  \ref{lemm_1}. The definition of beliefs and continuation payoffs
ensure that players are indifferent whenever randomizing, and that the overall payoff is exactly $\gamma$.

Observe also that there exists a neighborhood $V(p)$ of $p$, with the following property. The two beliefs $p'^{0}$ and $p'^{1}$
associated with $p'\in V(p)$ can be chosen to be continuous in $p'$ and $x'u_\star(p^{'0})+ (1-x')u_\star(p'^{1})$ (with $x'p'^{0}+(1-x')p'^{1}=p'$)
is bounded away from $u_\star(p')$ over $V(p)$. Together with the symmetric property for player 2, this ensures that the robustness result
mentioned after Proposition  \ref{prop_1} holds. This concludes the proof of Proposition
\ref{prop_1}.

\bigskip

We next proceed to the proof of Proposition \ref{prop_2}.
Let $p\in \overset{\circ}{\Delta}^\dag(S\times M)$, $q\in \overset{\circ}{\Delta}^\dag(T\times L)$, and $c>0$ be given. Let $\gamma$
be such that $|\gamma^1-u_\star(p)|\leq (1-\delta)c$ and $|\gamma^2-v_\star(q)|\leq (1-\delta)c$.
Let $[p^0,p^1]$ be the segment associated with $p$ in the proof of Proposition \ref{prop_1},
and let $y$ solve the equation $p=yp^0 + (1-y)p^1$.
Set
\[\eta:=\left(yu_\star(p^0)+(1-y)u_\star(p^1)\right)-u_\star(p)>0,\]
and let $N_1$ be defined as in the proof of Proposition \ref{prop_1}. By construction,
one has
\[(1-\delta^{N_1-1})u_\star(p)+\delta^{N_1-1}(u_\star(p)+\eta)< \gamma^1\leq u_\star(p)+(1-\delta)c,\]
hence $\eta\delta^{N_1-1}\leq (1-\delta)c$. Similarly, one has $\eta\delta^{N_2-1}\leq (1-\delta)c$  (for a possibly lower value of $\eta$).
In the construction of Proposition \ref{prop_1}, players repeat the same action until stage $\min\{N_1,N_2\}$. Therefore, for any $p'\in \Delta^\dag(S\times M)$
and every strategy $\sigma$, one has
\begin{eqnarray*}
\gamma^1(p',q,\sigma,\tau_{p,q,\gamma})&\leq &(1-\delta^{\min\{N_1,N_2\}})u_\star(p')+\delta^{\min\{N_1,N_2\}}\\
&\leq & u_\star(p')+(1-\delta)\frac{\delta c}{\eta}.
\end{eqnarray*}
The result follows, with $C=c/\eta$.


\section{General games}

We here complete the proof of the Main Theorem.
We start with a few notations and remarks in the spirit of Section \ref{sec a-1}. We let initial distributions
$p\in \Delta(S\times L_S\times M_S)$ and $q\in \Delta(T\times L_T\times M_T)$ be given. W.l.o.g., we also assume that $p(l_S)>0$ and $p(l_T)>0$
for each $l_S\in L_S$ and $l_T\in L_T$.\footnote{And we make the symmetric assumption for player 2.}
We assume that the information of
each player $i$ is valuable for the other player. This is equivalent to assuming that, for each $l_S\in L_S$, there is no action $a\in A$ that
is optimal at all beliefs $p_{l_S,m_S}:=p(\cdot|l_S,m_S)$, $m_S\in M_S$.

As the play proceeds, player 2 may disclose information relative to $\textbf{m}_S$, and player 1's belief about $\textbf{m}_S$ may change.
Analogously to the case of self-ignorant games (see Eq. (\ref{equ 87})),
the belief $\textbf{p}_n$ of player 1 given  $\textbf{l}_S=l_s$ is always in the set
\[ \Delta_{{l}_S}^\dag(S\times M_S) = \conv\{p_{l_S,m_S} \otimes 1_{l_S} \otimes 1_{m_S}, m_S \in M_S\}. \]
Note that $p(\cdot|l_S)$ lies in the relative interior of the set $\Delta^\dag(S\times M_S)$.

For $m_T\in M_T$, we define $\Delta^\dag_{m_T}(T\times L_T)$ in a symmetric way. The results of Section \ref{sec a-1}
will be applied to the different sets $\Delta^\dag_{l_S}(S\times M_S)$ and $\Delta^\dag_{m_T}(T\times L_T)$ of initial distributions.
\subsection{Providing Incentives}

For simplicity, we focus here on player 1. Analog properties hold for player 2 as well.
We first define an equivalence relation $\sim$ over
$L_S$. As we will see, two signals $\underline{l}_S$ and $\bar l_S$ such that $\underline{l}_S\sim \bar l_S$  may be merged, and treated as a single signal. Given $l_S\in L_S$, we define a vector $\vec{Z}^{l_S}$ of size
$M_S\times A\times A$ by
\[\vec{Z}^{l_S}_{m_S,a,a'}:=p(m_S|l_S)\left(u(p_{l_S,m_S},a)-u(p_{l_S,m_S},a')\right),\mbox{ for } m_S\in M_S,\mbox{ and }a,a'\in A.\]
Because the information held by player 2 is valuable for player 1, $\vec{Z}^{l_S}\neq \vec{0}$, for each $l_S\in L_S$.\footnote{Indeed, if $\vec{Z}^{l_S}=\vec{0}$, then
\emph{any} action $a\in A$ is optimal at $p_{l_S,m_S}$, for \emph{each} $m_S$.}

\begin{definition}
Let $\underline{l}_{S}, \bar{l}_{S}\in L_S$ be given. The two signals $%
\underline{l}_{S}$ and $\bar{l}_{S}$ are \emph{equivalent},
written $\underline{l}_{S}\sim\bar{l}_{S}$,
if the two vectors $\vec{Z}^{\underline{l}_{S}}$ and $\vec{Z}^{\bar{l}_S} $ are positively collinear, that is, if
\begin{equation}
\label{equ equivalent}
\exists \alpha > 0, \vec Z^{\bar{l}_{S}}=\alpha \vec Z^{\underline{l}_{S}}.
\end{equation}
\end{definition}
Plainly, if the two distributions $p(\cdot|\underline{l}_S)$ and $p(\cdot|\bar{l}_S)$ in $\Delta(S\times M_S)$ coincide, then
$\underline{l}_S\sim\bar{l}_S$. However, the converse implication does not hold.

Observe that, if $\underline l_S\sim\bar l_S$
and $\vec Z^{\bar{l}_{S}}=\alpha \vec Z^{\underline{l}_{S}}$,
then for every two mixed actions $x,x' \in \Delta(A)$ we have:
\begin{equation}
\label{equ 510}
p(m_{S}|\bar{l}_{S})\left(u(p_{\bar{ l}_{S},m_{S}},x)-u\left(p_{\bar{ l}_{S},m_{S}},x'\right)\right)
=\alpha p(m_{S}|\underline l_{S})\left(u(p_{\underline l_{S},m_{S}},x)-u\left(p_{\underline l_{S},m_{S}},x'\right)\right).
\end{equation}

As a preparation for Lemma \ref{lemm equi} below, observe that a strategy $\sigma$
may be viewed as a collection $(\sigma_{l_S})_{l_S\in L_S}$, with the interpretation that $\sigma_{l_S}:L_T\times H\to \Delta(A)$
is the `interim' strategy used if $\textbf{l}_S=l_S$.\footnote{To be formal, $\sigma_{l_S}(l_T,h)$ is defined to be $\sigma(l_S,l_T,h)$.}

\begin{lemma}\label{lemm equi}
Let $\tau$ be any strategy of player 2. Then there exists a best
reply $\sigma$ of player 1 to $\tau$ such that
$\sigma_{\bar{l}_S}=\sigma_{\underline{l}_S}$ whenever
$\bar{l}_S\sim\underline{l}_S$.
\end{lemma}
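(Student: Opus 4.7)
The plan is to show that, whenever $\bar l_S \sim \underline l_S$, the interim problems faced by player $1$ after observing $\bar l_S$ or after observing $\underline l_S$ have the \emph{same} set of optimal interim strategies against $\tau$. Once this is established, it suffices to pick, for each $\sim$-equivalence class $C \subseteq L_S$, a single optimal interim strategy $\sigma^\ast_C$ from the common maximizer set, and to set $\sigma_{l_S} := \sigma^\ast_C$ for every $l_S \in C$. The separability that underlies this construction comes from the decomposition $\gamma^1(p,q,\sigma,\tau) = \sum_{l_S \in L_S} p(l_S)\, V(l_S, \sigma_{l_S}, \tau)$, where $V(l_S, \sigma_{l_S}, \tau)$ denotes the discounted payoff of player $1$ conditional on $\textbf{l}_S = l_S$. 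Since $\tau$ never conditions on $l_S$ (player $2$ does not observe it) and $\sigma_{l_S}$ is the only channel through which $l_S$ affects the play, $V(l_S, \sigma_{l_S}, \tau)$ depends only on $\sigma_{l_S}$; hence a best reply is obtained by selecting each $\sigma_{l_S}$ independently in $B(l_S, \tau) := \arg\max_{\sigma'} V(l_S, \sigma', \tau)$.

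The next step is to derive a convenient formula for $V(l_S, \sigma', \tau)$. Conditional on $(l_S, m_S)$, Assumption \textbf{A.2} implies that the triple $(\textbf{l}_T, \textbf{m}_T, \textbf{t})$ has distribution $q$, independently of $(l_S, m_S, \textbf{s})$. Moreover, since payoffs are not observed and no one observes $\textbf{s}$ directly, the conditional law of the public history $h_n$ under $(\sigma', \tau)$ given $(m_S, \textbf{l}_T, \textbf{m}_T)$ does not further depend on $l_S$ or on $\textbf{s}$. Define, for any interim strategy $\sigma'$,
\[
\phi_{\sigma',\tau,m_S}(a) := (1-\delta)\sum_{n\geq 1}\delta^{n-1}\,\Pr_{\sigma',\tau,m_S}[\textbf{a}_n = a],
\]
i.e., the discounted occupation measure of action $a$ by player $1$ conditional on $\textbf{m}_S = m_S$; by the remark just made, $\phi_{\sigma',\tau,m_S}$ is well-defined without any reference to $l_S$. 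A routine computation then yields
\[
V(l_S, \sigma', \tau) = \sum_{m_S \in M_S} p(m_S\,|\,l_S)\, \sum_{a \in A} \phi_{\sigma',\tau,m_S}(a)\, u(p_{l_S, m_S}, a).
\]

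The decisive step is to recast differences in $V$ via the $\vec Z^{l_S}$-representation. Fix any reference action $a_0\in A$. For each $m_S$, $\sum_a [\phi_{\sigma',\tau,m_S}(a) - \phi_{\sigma'',\tau,m_S}(a)] = 0$, so subtracting the $l_S$-dependent constant $u(p_{l_S,m_S}, a_0)$ from each $u(p_{l_S,m_S}, a)$ leaves differences in $V$ unchanged. Thus,
\[
V(l_S, \sigma', \tau) - V(l_S, \sigma'', \tau) = \sum_{m_S, a} \bigl[\phi_{\sigma',\tau,m_S}(a) - \phi_{\sigma'',\tau,m_S}(a)\bigr]\, \vec Z^{l_S}_{m_S, a, a_0}.
\]
If $\bar l_S\sim \underline l_S$ with $\vec Z^{\bar l_S} = \alpha\, \vec Z^{\underline l_S}$ and $\alpha > 0$, it follows that
\[
V(\bar l_S, \sigma', \tau) - V(\bar l_S, \sigma'', \tau) = \alpha\,\bigl[V(\underline l_S, \sigma', \tau) - V(\underline l_S, \sigma'', \tau)\bigr],
\]
which immediately yields $B(\bar l_S, \tau) = B(\underline l_S, \tau)$, completing the reduction announced at the outset. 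The only slightly delicate point is observing that the history statistic $\phi_{\sigma',\tau,m_S}$ is literally the same object for both signals (this is where \textbf{A.2} and the non-observability of $\textbf{s}$ and of payoffs are used); after that, the $\vec Z$-identity does all the work.
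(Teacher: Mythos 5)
Your proof is correct and follows essentially the same route as the paper: both reduce player 1's interim payoff to a sum over $m_S$ of terms $p(m_S|l_S)\,u(p_{l_S,m_S},\cdot)$ weighted by play statistics that, by Assumption \textbf{A.2}, do not depend on $l_S$, and then invoke the positive collinearity of $\vec Z^{\bar l_S}$ and $\vec Z^{\underline l_S}$ to conclude that payoff differences between any two interim strategies scale by the factor $\alpha>0$, so the sets of interim best replies coincide. The only difference is one of packaging: you aggregate over time into a discounted occupation measure and normalize against a reference action $a_0$, whereas the paper establishes the scaling identity stage by stage via a double sum over pairs of histories; your version is arguably tidier but the mechanism is identical.
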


According to Lemma \ref{lemm equi}, player 1 has a best reply that
depends only on the equivalence class of $\textbf{l}_S$.
\bigskip\noindent

\begin{proof}
Let a strategy $\tau$ of player 2 be fixed throughout. Given $f:H\to \Delta(A)$, and $(l_S,l_T)\in L_S\times L_T$, we denote by $\gamma^1(f,\tau|l_S,l_T)$ the interim expected payoff of player 1, when getting $\textbf{l}_S=l_S$, $\textbf{l}_T=l_T$, and when playing according
to $f$ thereafter. Given $n\geq 1$, we also denote by $g_n^1(f,\tau|l_S,l_T)$ the corresponding payoff at stage $n$.

We let $\bar{l}_S\sim\underline{l}_S$ be any two equivalent signals, so that $\vec{Z}^{\bar{l}_S}=\alpha\vec{Z}^{\underline{l}_S }$
for some $\alpha >0$. We will prove that, for every two ``interim strategies'' $f:H\to \Delta(A)$ and $f':H\to \Delta(A)$, for every $l_T\in L_T$
and every stage $n\geq 1$, one has
\begin{equation}\label{payoff_equi}
g^1_n(f,\tau|\bar l_S,l_T)-g^1_n(f',\tau|\bar l_S,l_T)=\alpha\left( g^1_n(f,\tau|\underline l_S,l_T)-g^1_n(f',\tau|\underline l_S,l_T)\right).
\end{equation}
Equation (\ref{payoff_equi}) will imply that
\[\gamma^1(f,\tau|\bar l_S,l_T)-\gamma^1(f',\tau|\bar l_S,l_T)=\alpha\left( \gamma^1(f,\tau|\underline l_S,l_T)-\gamma^1(f',\tau|\underline l_S,l_T)\right),
\]
from which the result follows.
Indeed, if $f$ is better than $f'$ when the signal is $\bar{l}_S$,
then it is also the case when the signal is $\underline{l}_S$.
Therefore if $f$ is a best response when the signal is $\bar{l}_S$,
then it is also a best response when the signal is $\underline{l}_S$.

We let a stage $n\geq 1$ be given. We fix $(l_S,l_T)\in L_S\times L_T$, and we decompose the payoff $g^1_n(f,\tau|l_S,l_T)$ as follows. For a given sequence of moves $h\in H_n:=(A\times B)^{n-1}$, we denote by $\prob_{f,\tau}(h|l_S,l_T)$
the probability that $h$ occurs, when $(\textbf{l} _S,\textbf{l}_T)=(l_S,l_T)$ and players
play according to $f$ and $\tau$. We denote by $\prob_{f,\tau}(\cdot|h,l_S,l_T)\in \Delta(S\times M_S)$ the belief which is then held by player 1.

With these notations, one has
\begin{equation}\label{payoff_equi2}
g^1_n(f,\tau|l_S,l_T)=\sum_{h\in H_n}\prob_{f,\tau}(h|l_S,l_T)u\left(\prob_{f,\tau}\left(\cdot|h,l_S,l_T\right)),f(h)\right).
\end{equation}

The belief of player 1 following $h$  is given by
\[\prob_{f,\tau}(s|h,l_S,l_T)=\frac{1}{\prob_{f,\tau}(h,l_S,l_T)}\sum_{m_S\in M_S}\prob_{f,\tau}(s,h,l_S,l_T,m_S),\mbox{ }s\in S,\]
where $\prob_{f,\tau}(h,l_S,l_T)=\prob_{f,\tau}(h|l_S,l_T)p(l_S)q(l_T)$. Because the state $\textbf{s}$ and the history of moves until stage $n$ are conditionally independent given $(\textbf{l}_S,\textbf{l}_T,\textbf{m}_S)$, this belief is equal to
\begin{equation}
\label{equ 88}
\prob_{f,\tau}(s|h,l_S,l_T)=\frac{1}{\prob_{f,\tau}(h,l_S,l_T)}\sum_{m_S\in M_S}\prob_{f,\tau}(h|l_S,l_T,m_S)p_{l_S,m_S}(s)p(l_S,m_S)q(l_T).
\end{equation}
Plugging (\ref{equ 88}) into (\ref{payoff_equi2}), and using the linearity of $u$, one gets
\begin{equation}\label{equ_511}
g^1_n(f,\tau|l_S,l_T)=\sum_{h\in H_n}\sum_{m_S\in M_S}p(m_S|l_S)\prob_{f,\tau}(h|l_S,l_T,m_S)u(p_{l_S,m_S},f(h)).\end{equation}

Using (\ref{equ_511}) for both $f$ and $f'$,
and because $\sum_{h \in H_n}\prob_{\sigma,\tau}(h \mid l_S,l_T,m_S) =
\sum_{h' \in H_n} \prob_{\sigma',\tau}(h' \mid l_S,l_T,m_S)=1$,
we obtain:
\begin{small}
\begin{eqnarray}
\label{equ141}
&&g^1_n(f,\tau|{l}_{S},l_{T})-g^1_n(f^{\prime},\tau|{l}_S,l_{T})\\
&&=
\sum_{m_S \in M_S} p(m_S|l_S)\left(\sum_{h \in H_n}
\prob_{f,\tau}(h|l_S,l_T,m_S)  u(p_{l_S,m_S},f(h)) -
\sum_{h' \in H_n}
\prob_{f',\tau}(h'|l_S,l_T,m_S) u(p_{l_S,m_S},f'(h'))\right)\nonumber\\
&&=
\sum_{m_S \in M_S} \sum_{h \in H_n} \sum_{h' \in H_n}
\prob_{f,\tau}(h|l_S,l_T,m_S)
\prob_{f',\tau}(h'|l_S,l_T,m_S) \times p(m_S|l_S)
\left( u(p_{l_S,m_S},f( h)) -
u(p_{l_S,m_S},f'(h'))\right).\nonumber
\end{eqnarray}
\end{small}
Because $\underline l_S$ and $\bar l_S$ are equivalent, Eq. (\ref{payoff_equi}) follows by (\ref{equ 510}),
and (\ref{equ141}) applied to both $l_S = \underline{l}_S$
and  $l_S = \bar{l}_S$.
\end{proof}
\bigskip

Lemma \ref{lemm equi} implies that any equilibrium of the modified game in which player 1 only observes the equivalence class of $\textbf{l}_S$ is an equilibrium of the original game. Put it differently, the set of equilibrium payoffs of the game in which players do not distinguish between equivalent signals is a subset of the set of equilibrium payoffs of the game we started with. Besides, the values of $u_\star$ and $v_\star$ (resp., of
$u_{\star\star}$ and $v_{\star\star}$) are the same for both games.

Therefore, it is sufficient to prove that
 the conclusion of the Main Theorem holds  for the modified game. In particular, we may and
 will assume from here on that, for every two signals $\bar l_S\neq \underline l_S$, the vectors $\vec{Z}^{\bar l_S}$ and
 $\vec{Z}^{\underline l_S}$ are \emph{not} positively collinear. We also make the symmetric assumption for player 2.
A direct consequence of this assumption is Corollary \ref{cor_equi} below.
\begin{corollary}\label{cor_equi}
Let $\bar{a}\in A$ be arbitrary. For $l_S\in L_S$, define the vector $\vec{Y}^{l_S}$ of size $M_S\times A$ by
\[\vec{Y}^{l_S}_{m_S,a}:=p(m_S|l_S)\left(u(p_{l_S,m_S},a)-u(p_{l_S,m_S},\bar a)\right), \ \ \ m_S \in M_S, a \in A.\]
Then for every two signals $\bar l_S\neq \underline l_S$, the two vectors $\vec{Y}^{\bar l_S}$ and $\vec{Y}^{\underline l_S}$ are not positively collinear.
\end{corollary}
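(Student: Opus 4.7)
The plan is to reduce the claim to the non-equivalence of $\bar{l}_S$ and $\underline{l}_S$, which by the standing assumption (that the modified game no longer has equivalent signals) means that $\vec{Z}^{\bar{l}_S}$ and $\vec{Z}^{\underline{l}_S}$ are not positively collinear. The key observation is that $\vec{Y}^{l_S}$ is essentially a ``slice'' of $\vec{Z}^{l_S}$: by definition,
\[ \vec{Y}^{l_S}_{m_S,a} = p(m_S \mid l_S)\bigl(u(p_{l_S,m_S},a)-u(p_{l_S,m_S},\bar{a})\bigr) = \vec{Z}^{l_S}_{m_S,a,\bar{a}}. \]
Moreover, the full tensor $\vec{Z}^{l_S}$ is recoverable from $\vec{Y}^{l_S}$ by taking differences, since
\[ \vec{Z}^{l_S}_{m_S,a,a'} = p(m_S \mid l_S)\bigl[(u(p_{l_S,m_S},a)-u(p_{l_S,m_S},\bar{a})) - (u(p_{l_S,m_S},a')-u(p_{l_S,m_S},\bar{a}))\bigr] = \vec{Y}^{l_S}_{m_S,a} - \vec{Y}^{l_S}_{m_S,a'}. \]

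With this observation, the proof is immediate. I would argue by contradiction: suppose that for some distinct $\bar{l}_S, \underline{l}_S \in L_S$ there exists $\alpha > 0$ with $\vec{Y}^{\bar{l}_S} = \alpha \vec{Y}^{\underline{l}_S}$. Then for every $m_S \in M_S$ and every pair $a, a' \in A$, applying the difference identity above to both signals gives
\[ \vec{Z}^{\bar{l}_S}_{m_S,a,a'} = \vec{Y}^{\bar{l}_S}_{m_S,a} - \vec{Y}^{\bar{l}_S}_{m_S,a'} = \alpha\bigl(\vec{Y}^{\underline{l}_S}_{m_S,a} - \vec{Y}^{\underline{l}_S}_{m_S,a'}\bigr) = \alpha\, \vec{Z}^{\underline{l}_S}_{m_S,a,a'}. \]
Hence $\vec{Z}^{\bar{l}_S} = \alpha \vec{Z}^{\underline{l}_S}$ with $\alpha > 0$, which means $\bar{l}_S \sim \underline{l}_S$ in the sense of Definition on equivalent signals, contradicting the standing assumption that the modified game has no equivalent signals.

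There is no real obstacle here; the corollary is a one-line linear-algebraic consequence of the reduction of $\vec{Z}$ to differences of columns of $\vec{Y}$, together with the standing assumption. The only thing worth emphasizing in the write-up is that the scalar $\alpha$ transfers unchanged (and with the same sign) from the $\vec{Y}$-relation to the $\vec{Z}$-relation, which is what makes ``positively collinear'' go through on both sides.
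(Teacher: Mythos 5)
Your proposal is correct and follows essentially the same route as the paper: argue by contradiction, use the identity $\vec{Z}^{l_S}_{m_S,a,a'}=\vec{Y}^{l_S}_{m_S,a}-\vec{Y}^{l_S}_{m_S,a'}$ to transfer the positive-collinearity relation (with the same $\alpha>0$) from $\vec{Y}$ to $\vec{Z}$, and contradict the standing non-equivalence assumption. Nothing is missing.
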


The vector $\vec{Y}^{l_S}$ is equal to the projection of $\vec{Z}^{l_S}$ on a lower-dimensional space. Hence, linear independence of $\vec{Y}^{\bar l_S}$ and of $\vec{Y}^{\underline l_S}$ does not follow in general from linear independence of $\vec{Z}^{\bar l_S}$ and $\vec{Z}^{\underline l_S}$, and an \emph{ad hoc} proof is needed.

\begin{proof}
We argue by contradiction, and assume that $\vec{Y}^{\bar l_S}=\alpha \vec{Y}^{\underline l_S}$ for some $\alpha> 0$. Let $m_S\in M_S,a,a'\in A$ be arbitrary.
Observe that $\vec{Z}^{l_S}_{m_S,a,a'}=\vec{Y}^{l_S}_{m_S,a}-\vec{Y}^{l_S}_{m_S,a'}$, for $l_S=\bar l_S, \underline l_S$. Hence $\vec{Z}^{\bar l_S}=
\alpha\vec{Z}^{\underline l_S}$, a contradiction.
\end{proof}

\bigskip
The next lemma is central to the provision of incentives (phase 2 of the equilibrium play). Given $x:L_S\times M_S\to \Delta(A)$,
and for every $l_S,k\in L_S$, we define
\[\E_x[l_S\to k]=\sum_{m_S\in M_S}p(m_S|l_S)u(p_{l_S,m_S},x_{k,m_S}),\]
with the following interpretation. The expression $\E_x[l_S\to k]$ is the expected stage payoff when player 1 gets $\textbf{l}_S=l_S\in L_S$,
`reports' $k\in L_S$, is told $\textbf{m}_S$, and plays the mixed action $x_{k,\textbf{m}_S}$
that depends on player 1's report, and on player 2's signal.\footnote{We use the different letter $k$ to
distinguish between a signal  and a report, although both belong to the same set $L_S$.}
According to Lemma \ref{lemm_incen} below, the map $x$ can be chosen in a way that this expected payoff is highest when reporting truthfully.

\begin{lemma}\label{lemm_incen}
There exists $x^\star:L_S\times M_S\to \Delta(A)$, such that
\[\E_{x^\star}[l_S\to k]<\E_{x^\star}[l_S\to l_S],\mbox{ for every }l_S, k\in L_S, l_S\neq k.\]
\end{lemma}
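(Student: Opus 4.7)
My plan is to implement $x^\star$ as a small perturbation of the uniform mixed action, in type-dependent directions supplied by the vectors $\vec{Y}^{l_S}$ of Corollary \ref{cor_equi}. Fix $\bar{a}\in A$ as in that corollary, and set
\[x^\star_{k,m_S}(a) := \frac{1}{|A|} + \varepsilon\,\xi_k(m_S,a),\]
where each $\xi_k : M_S\times A\to\dR$ is to be chosen in the subspace $V_0 := \{\xi : \sum_a \xi(m_S,a)=0 \text{ for all }m_S\}$, and $\varepsilon>0$ is small enough that $x^\star_{k,m_S}(a)\geq 0$. Substituting into the definition of $\E_{x^\star}[l_S\to k]$ and using $\sum_a \xi_k(m_S,a)=0$ to absorb the $\bar a$-dependent terms yields
\[\E_{x^\star}[l_S\to k] = C(l_S) + \varepsilon\,\langle \xi_k,\vec{Y}^{l_S}\rangle,\]
for some $C(l_S)$ independent of $k$. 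Thus the lemma reduces to finding a family $(\xi_k)_{k\in L_S}\subset V_0$ satisfying $\langle \xi_{l_S}-\xi_k,\vec{Y}^{l_S}\rangle>0$ for all $l_S\neq k$.

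Next, I would transfer this problem inside $V_0$: let $w^{l_S}$ denote the orthogonal projection of $\vec{Y}^{l_S}$ onto $V_0$. Since $\xi_k\in V_0$, one has $\langle \xi_k,\vec{Y}^{l_S}\rangle=\langle \xi_k,w^{l_S}\rangle$. The key claim, which I expect to be the main obstacle, is that the projections inherit the non-degeneracy of the $\vec{Y}^{l_S}$'s: each $w^{l_S}$ is nonzero, and no two are positively collinear. This rests on the identity
\[\vec{Y}^{l_S}_{m_S,a} = w^{l_S}_{m_S,a}-w^{l_S}_{m_S,\bar a},\]
which follows directly from the fact that projection onto $V_0$ amounts to subtracting the $a$-average for each $m_S$. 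From this identity, $w^k=\alpha w^{l_S}$ with $\alpha>0$ would yield $\vec{Y}^k=\alpha\vec{Y}^{l_S}$, contradicting Corollary \ref{cor_equi}; and $w^{l_S}=0$ would force $\vec{Y}^{l_S}=0$, and hence $\vec{Z}^{l_S}=0$, contradicting the valuable-information hypothesis.

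Finally, I would set $\xi_k := w^k/\|w^k\|$. Then $\langle \xi_{l_S},w^{l_S}\rangle = \|w^{l_S}\|$ and $\langle \xi_k,w^{l_S}\rangle = \langle w^k,w^{l_S}\rangle/\|w^k\|$, so the required strict inequality becomes $\|w^k\|\cdot\|w^{l_S}\| > \langle w^k,w^{l_S}\rangle$, which is precisely the strict case of Cauchy--Schwarz, valid since $w^k$ and $w^{l_S}$ are nonzero and not positively collinear. Choosing $\varepsilon>0$ small enough that $x^\star_{k,m_S}(a)\geq 0$ for every $k,m_S,a$ then produces a valid strategy and completes the proof.
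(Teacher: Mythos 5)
Your proof is correct and follows essentially the same route as the paper's: both reduce $\E_{x}[l_S\to k]$ to a term of the form $C(l_S)+\langle \cdot\,,\vec{Y}^{l_S}\rangle$ linear in the mixed actions, take normalized copies of the $\vec{Y}^{l_S}$ as the directions, and conclude by strict Cauchy--Schwarz from the non-positive-collinearity of Corollary \ref{cor_equi}. The only difference is bookkeeping: you embed the directions into the simplex by projecting onto the zero-sum subspace and perturbing the uniform action (checking that the projections stay nonzero and non-positively-collinear), whereas the paper adds a constant $\beta$, rescales by $\phi>0$, and puts the residual mass on $\bar a$ -- these are interchangeable.
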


\begin{proof}
Let $\bar a\in A$ be arbitrary, and let $x:L_S\times M_S\to \Delta(A)$ be given. For $k\in L_S$, we define a vector $\vec{X}^k$ of
size $M_S\times A$ by $\vec{X}^k_{m_S,a}:=x_{k,m_S}(a)$,  $m_S\in M_S,a\in A$. Observe that $x_{k,m_{S}}(\bar{a})=1-%
\displaystyle \sum_{a\neq \bar{a}}x_{k,m_{S}}(a)$. Hence, $\E_x[l_S\to k]$ may be rewritten as
\begin{eqnarray*}
{\mathbf{E}}_{x}[l_{S}\to k]&=&\sum_{m_{S}\in
M_S}p(m_{S}|l_{S})u(p_{l_{S},m_{S}},\bar{a}) \\
&& +\sum_{m_{S}\in M_S}\sum_{a\in
A}p(m_{S}|l_{S})x_{k,m_{S}}(a)\left(u\left(p_{l_{S},m_{S}},a\right)-u%
\left(p_{l_{S},m_{S}},\bar{a}\right)\right)\\
&=& \vec{Y}^{l_S}\cdot \vec{X}^k+\sum_{m_{S}\in
M_S}p(m_{S}|l_{S})u(p_{l_{S},m_{S}},\bar{a}).
\end{eqnarray*}
Because the second term in the last displayed equation does not depend on $k$, it is sufficient to construct $x$ such that
\begin{equation}\label{incentive}
\vec{Y}^{l_S}\cdot \vec{X}^k<\vec{Y}^{l_S}\cdot \vec{X}^{l_S}\mbox{ for every }l_S, k\in L_S, l_S\neq k.\end{equation}

For $l_{S}\in L_S$, define
\begin{equation*}
\tilde{X}^{l_{S}}:=\frac{1}{\|\vec{Y}^{l_{S}}\|_2}\vec{Y}^{l_{S}},
\end{equation*}
and let $l_{S}\neq k$ be arbitrary in $L_S$.
Then by the Cauchy-Schwartz inequality,
\begin{equation*}
\tilde X^{k} \cdot \vec{Y}^{l_S} =
\frac{\vec{Y}^{k}}{\|\vec{Y}^{k}\|_2} \cdot \vec{Y}^{l_S} <
\| \vec{Y}^{l_S} \|_2 =
\frac{\vec{Y}^{l_S}}{\|\vec{Y}^{l_S}\|_2} \cdot \vec{Y}^{l_S} =
\tilde X^{l_S} \cdot \vec{Y}^{l_S},
\end{equation*}
where the strict inequality holds since $\vec{Y}^k$ and $\vec{Y}^{l_S}$ are not positively collinear.
Therefore, (\ref{incentive}) holds with $(\tilde{X}^{l_{S}})_{l_{S}\in L_S}$. Note that (%
\ref{incentive}) still holds when the same constant is added to all
components, and/or when all components are multiplied by the same constant $%
\phi >0$. Choose $\beta\in {\mathbf{R}}$ and $\phi>0$ such that all
components of $\phi \tilde{X}^{l_{S}}+\beta$ lie in $(0,\frac{1}{|M_S\times A|})$%
, for all $l_{S}$.
Because $Y^{l_S}_{m_S,\bar a} = 0$, it suffices to set
\begin{equation*}
x^\star_{l_{S},m_{S}}(a)=\phi \tilde{X}^{l_{S}}_{m_{S},a}+\beta\mbox{ for } a\neq \bar{a},
\end{equation*}
and $x^\star_{l_{S},m_{S}}(\bar{a})=1-\displaystyle \sum_{a\neq \bar{a}}x_{l_{S},m_{S}}(a)$%
.
\end{proof}

Given $\ep_2:M_S\to \Delta(M_S)$, and $l_S,k\in L_S$, we define
\[\E_{\ep_2,x^\star}[l_S\to k]=\sum_{m_S,\mu\in M_S}p(m_S|l_S)\ep_2(\mu|m_S)u(p_{l_S,\mu},x^\star_{k,\mu}).\]
This is the expected  stage payoff of player 1 when (i) player 1 gets $\textbf{l}_S=l_S$, and  `reports' $k$,
(ii) player 2 draws $\mu\in M_S$ according to $\ep_2(\cdot|\textbf{m}_S)$ and (iii) player 1 plays $x^\star_{k,\mu}$.
We here abuse notation and write $p_{l_S,\mu}$ for the belief of player 1, given $l_S$ and $\mu$.\footnote{Note that, for fixed $\mu$,  the
belief $p_{l_S,\mu}$ depends on $\ep_2$, although this is not emphasized in the notation.}

Observe that the expectation $\E_{\ep_2,x^\star}[l_S\to k]$ is continuous w.r.t. $\ep_2$, and that $\E_{\ep_2,x^\star}[l_S\to k]$
is equal to $\E_{x^\star}[l_S\to k]$ when $\ep_2(\cdot|m_S)$ assigns probability 1 to $m_S$, for each $m_S$.
Corollary \ref{cor_incen} below  therefore follows from Lemma \ref{lemm_incen} by continuity.

\begin{corollary}\label{cor_incen}
There exists $\ep_2:M_S\to \overset{\circ}{%
\Delta}(M_S)$, such that
\begin{equation}  \label{misreport2}
{\mathbf{E}}_{\varepsilon_2,x^\star}[l_{S}\to k]< {\mathbf{E}}_{%
\varepsilon_2,x^\star}[l_{S}\to l_{S}], \mbox{ for every }l_S, k\in L_S, l_S\neq k.
\end{equation}
\end{corollary}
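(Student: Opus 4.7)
The plan is to carry out the continuity argument alluded to just before the statement, but first to rewrite $\E_{\ep_2,x^\star}[l_S\to k]$ in a form whose $\ep_2$-dependence is manifestly linear, so that no separate analysis of the Bayesian posterior $p_{l_S,\mu}$ is required.

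First, I would unfold $u(p_{l_S,\mu},x^\star_{k,\mu})=\sum_s p_{l_S,\mu}(s)\,u(s,x^\star_{k,\mu})$ and apply the Bayes identity $p(\mu|l_S)\,p_{l_S,\mu}(s)=\sum_{m_S}p(s,m_S|l_S)\,\ep_2(\mu|m_S)$, together with $p(\mu|l_S)=\sum_{m_S}p(m_S|l_S)\,\ep_2(\mu|m_S)$. After rearranging the sums this gives
\[
\E_{\ep_2,x^\star}[l_S\to k]=\sum_{s\in S,\ m_S,\mu\in M_S} p(s,m_S|l_S)\,\ep_2(\mu|m_S)\,u(s,x^\star_{k,\mu}),
\]
an expression which is \emph{affine}, and hence continuous, in $\ep_2$.

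Second, I would specialize to the Dirac map $\ep_2^0(\mu|m_S):=\mathbf{1}_{\{\mu=m_S\}}$. Under $\ep_2^0$ only the diagonal terms $\mu=m_S$ contribute and the above display reduces to $\sum_{s,m_S}p(s,m_S|l_S)\,u(s,x^\star_{k,m_S})=\E_{x^\star}[l_S\to k]$. Lemma \ref{lemm_incen} therefore yields $\E_{\ep_2^0,x^\star}[l_S\to k]<\E_{\ep_2^0,x^\star}[l_S\to l_S]$ for every pair $l_S\neq k$.

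Third, I would perturb $\ep_2^0$ into the interior by setting, for $\eta\in(0,1)$,
\[
\ep_2^\eta(\mu|m_S):=(1-\eta)\mathbf{1}_{\{\mu=m_S\}}+\frac{\eta}{|M_S|},
\]
so that $\ep_2^\eta(\,\cdot\,|m_S)\in\overset{\circ}{\Delta}(M_S)$ for every $\eta>0$ and $\ep_2^\eta\to\ep_2^0$ as $\eta\to0^+$. By the continuity established in the first step, and because there are only finitely many pairs $(l_S,k)$ with $l_S\neq k$, the strict inequalities of the second step persist for all sufficiently small $\eta>0$; any such $\ep_2^\eta$ satisfies the conclusion of the corollary. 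The only delicate point in this plan is the initial rewriting, since $p_{l_S,\mu}$ depends on $\ep_2$ through a Bayesian quotient; folding the numerator of that quotient into the outer sum is precisely what replaces a quotient by an affine expression in $\ep_2$ and makes the subsequent perturbation step trivial.
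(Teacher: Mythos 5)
Your proposal is correct and follows essentially the same route as the paper, which likewise obtains the corollary from Lemma \ref{lemm_incen} by observing that $\E_{\ep_2,x^\star}[l_S\to k]$ is continuous in $\ep_2$ and reduces to $\E_{x^\star}[l_S\to k]$ at the Dirac kernel, so the finitely many strict inequalities survive a small full-support perturbation. Your only addition is to make the continuity explicit by folding the Bayesian posterior $p_{l_S,\mu}$ back into the sum to exhibit an expression affine in $\ep_2$ --- a point the paper asserts without detail --- and that rewriting is valid and slightly tightens the argument.
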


We fix $\ep_2$ and $x^\star$ for the rest of the paper. Because the distribution $\ep_2(\cdot|m_S)$ has full
support, the conditional distribution $p_{l_S,\mu}$ lies in the relative interior of $\Delta^\dag_{l_S}(S\times M_S)$ (for each $\mu\in M_S$).
Define $\ep_1$ analogously.

\subsection{Equilibrium strategies -- Structure}

We let a payoff vector $\gamma=(\gamma^1,\gamma^2)$ be given, with $ u_\star<\gamma^1<u_{\star\star}$ and $v_\star<\gamma^2<v_{\star\star}$. We
will construct a sequential equilibrium with payoff $\gamma$. We let the discount factor $\delta$ be given.
In the construction we add one additional message, $\square$, to each player.

Given $x\in \Delta(A)$, and given a number $N$ of stages, we denote by $\vec{a}^N(x)\in A^N$, a sequence of actions of length $N$
that provides the best approximation of the mixed action $x$ in terms of discounted frequencies. That is, $\vec{a}^N(x)=(a_n)_{1\leq n\leq N}$ is
chosen to minimize $\|x_\delta(\vec{a}^N)-x\|_\infty$, where
\[x_\delta(\vec{a}^N)[a]:=\frac{1-\delta}{1-\delta^{N}}\sum_{n=1}^{N}%
\delta^{n-1}1_{\{a_n=a\}},\mbox{  } a\in A.
\]

The sequence $\vec{a}^N(x^\star_{k,\mu_1})$ will  be the sequence of actions required from player 1 in phase 2.2,
when player 1 reports $k\in L_S$ and
player 2 sends the message $\mu_1\in M_S\cup\{\square\}$. For $\mu_1=\square$, we let $\vec{a}^N(x^\star_{k,\mu_1})$ be an arbitrary sequence
of actions, that does not depend on $k\in L_S$.

Similarly, $\vec{b}^N(y) \in B^N$ is a vector that approximates the mixed action $y$ in terms of discounted frequencies.

We set $K_1:=\max\{|L_S|,|M_T|\}$, and we let $\alpha_1:L_S\to A^{K_1}$ and $\beta_1:M_T\to B^{K_1}$ be arbitrary one-to-one maps. Similarly, we set
$K_2:=1+\max\{|L_T|,|M_S|\}$, and we let $\alpha_2:L_T\cup\{\square\}\to A^{K_2}$ and $\beta_2:M_S\cup\{\square\}\to B^{K_2}$ be arbitrary one-to-one maps. The maps $\alpha_1$ and
$\beta_1$ are used to encode reports on one's own state into sequences of actions, while the maps $\alpha_2$ and $\beta_2$ are used to encode messages on the other player's state into
sequences of actions.

We let $\pi^1\in \overset{\circ}{\Delta}(L_T)$ and $\pi^2\in
\overset{\circ}{\Delta}(M_S)$ be arbitrary distributions with full
support.
\bigskip

We now proceed to the definition of a strategy profile $(\sigma_\delta,\tau_\delta)$. The definition involves additional parameters $\theta,
\zeta ,$ and $\psi^i,\psi^i_\square$ ($i=1,2$), all in $(0,1)$, which will be chosen later. We first define the
profile only at information sets that are not ruled out by the definition of $(\sigma_\delta,\tau_\delta)$ at earlier information sets. The
definition of $(\sigma_\delta,\tau_\delta)$ at information
sets that are reached with probability zero will be provided after.

\begin{description}
\item[Phase 1] It lasts $K_1$ stages. Player 1 plays the sequence $\alpha_1(\textbf{l}_S)$ of actions, and player 2 plays the sequence
$\beta_1(\textbf{m}_T)$ of actions.
\item[Phase 2] It is divided into two subphases, \textbf{Phase 2.1} and \textbf{Phase 2.2}.
\begin{description}
\item[Phase 2.1] It lasts $K_2$ stages. Player 1 first draws a message $\lambda_1\in L_T\cup \{\square\}$. The probability assigned to $\square$,
({resp.} to each $l'_T\in L_T$),  is equal  to $1-\zeta$ ({resp.}  $\zeta\times \ep_1(l'_T|\textbf{l}_T)$). Symmetrically, player 2
draws a message $\mu_1\in M_S\cup\{\square\}$. The probability assigned to $\square$, ({resp.} to each $m'_S\in M_S$) is equal to $1-\zeta$,
({resp.}   $\zeta\times \ep_2(m'_S|\textbf{m}_S)$).

In that phase, the players play the sequences $\alpha_2(\lambda_1)$ and $\beta_2(\mu_1)$ of actions.
\item[Phase 2.2] It lasts $\nu := \lfloor \frac{\ln (1-\theta)}{\ln\delta}\rfloor$ stages. Player 1 infers $\mu_1$ from the actions
played by player 2 in Phase 2.1, and plays the sequence $\vec{a}^\nu(x^\star_{\textbf{l}_S,\mu_1})$ of actions. Meanwhile, player 2
infers $\lambda_1$ from the actions played by player 1 in Phase 2.1, and plays the sequence
$\vec{b}(y^\star_{\textbf{m}_T,\lambda_1})$ of actions.
\end{description}
\item[Phase 3] It lasts $K_2$ stages. Player 1 draws a message $\lambda_2\in L_T$. The distribution of $\lambda_2$ depends on $\lambda_1$.
If $\lambda_1=\square$, the probability assigned to $\textbf{l}_T$ ({resp.} to each $l'_T\neq \textbf{l}_T$), is equal
 $(1-\psi^1_\square)+\psi^1_\square \times\pi^1(\textbf{l}_T)$ ({resp.}  $\psi_\square^1\times \pi^1(l'_T)$). If
 $\lambda_1\neq \square$, the probability assigned to $\lambda_2$ is equal to
 $(1-\psi^1)+\psi^1\times \pi^1(\lambda_2)$ if $\lambda_2=\textbf{l}_T$, and it is equal $\psi^1\times \pi^1(\lambda_2)$
otherwise.  Player 2 draws a message $\mu_2\in M_S$. The distribution of $\mu_2$ depends on $\mu_1$, and is obtained as for player 1.

In this phase, the players play the sequences $\alpha_2(\lambda_2)$ and $\beta_2(\mu_2)$ of actions.
\item[Phase 4] It contains all remaining stages. We denote by $N = K_1 + 2K_2 + \nu+1$ its first stage. Let $h=(a_n(h),b_n(h))_{n< N}\in (A\times B)^{N-1}$
be the history of moves up to stage $N$. Player 2 infers from $h$ the belief $p_n(h)$ held by player 1
in each stage $n<N$ along $h$. In this computation, the report of player 1 in \textbf{Phase 1} is assumed to be truthful. For $n<N$,
the belief $q_n(h)\in \Delta(T\times L_T)$ is defined in a symmetric way.
The players compute
    \[c_1(h)=\delta^{-N}\sum_n(1-\delta)\delta^{n-1}c(p_n(h),a_n(h))\mbox{ and } c_2(h)=\delta^{-N}\sum_n(1-\delta)\delta^{n-1}c(q_n(h),b_n(h)),\]
    where the sum is taken over all stages $n$ of \textbf{Phases 1}, \textbf{2.1} and \textbf{3}. Players then start playing according to the
    equilibrium profile of the semi-ignorant game $\Gamma(p_n(h),q_n(h))$, with payoff $(u_\star(p_n(h))+c_1(h),v_\star(q_n(h))+c_2(h))$.
\end{description}

Some interpretation may be helpful. In Phase 2.1, the message $\square$ is uninformative,\footnote{Since its probability does not
 depend on signals.} and is sent with high probability. In Phase 3, the level noise in the message sent by player 1 depend on player 1's first message, and is either $\psi^1$ if the first message was informative, or $\psi^1_\square$ otherwise.

\subsection{Equilibrium Strategies -- Parameter values}

We now fix the parameter values, starting with $\theta$. As $\delta\to 1$, the discounted weight of the $\lfloor \frac{\ln(1-\theta)}{\ln\delta}\rfloor$
stages of Phase 2.2 converges to $\theta$. Thus, $\theta$ is a measure of the contribution of the checking phase 2.2 to the total payoff. We choose
$\theta\in (0,1)$ to be small enough so that the following set of inequalities is satisfied:
\begin{eqnarray}\label{cond1}
(1-\theta)\E[u_\star(p_{l_S,\textbf{m}_S})|\textbf{l}_S=l_S,\mu_1=m_S]&>&
u_\star(p(\cdot|\textbf{l}_S=l_S,\mu_1=m_S)),\ \forall m_S\in M_S\\
 (1-\theta) u_{\star\star} &>&\gamma^1,\label{cond2}
\end{eqnarray}
together with the symmetric conditions for player 2.

By construction, the conditional distribution of $\textbf{m}_S$ given $(\textbf{l}_S,\mu_1)=(l_S,m_S)$
is independent of $\zeta$, and only depends on the fixed map $\ep_2$. Since $\ep_2(\cdot|m'_S)$ has full support for each $m'_S$, this conditional
distribution has full support. Therefore, the residual information held by player 2
is still valuable to player 1, whatever be $\mu^1\in \{\square\}\cup M_S$. In particular, (\ref{cond1}) holds with $\theta=0$, and thus also for
$\theta>0$ small enough. Because $\gamma^1<u_{\star\star}$,
condition (\ref{cond2}) is also satisfied for small $\theta$.

Condition (\ref{cond1})
ensures that, even if payoffs in phase 2.2 are very low, the weight $\theta$ of phase 2.2 is so small, that the residual value of the
information held by
player 2 can still offset the cost incurred when playing the prescribed sequence in phase 2.2. Condition (\ref{cond1}) is designed to make
sure that, when in phase 2.2, player 1 will rather play the prescribed sequence of actions, than switch to an optimal action.

Observe that with probability $1-\zeta$, player 1 receives no information prior to phase 3. Hence, for $\zeta>0$ small, the
bulk of information exchange takes place in phase 3. Condition (\ref{cond2}) ensures that, even if all information exchange is
postponed to phase 3, payoffs as high as $\gamma^1$ can be implemented.

\bigskip
Choose $\zeta\in (0,1)$ to be small enough so that the two inequalities
\begin{equation}\label{cond3}(1-\zeta) u_\star +\zeta u_{\star\star}<\gamma^1 <(1-\zeta)(1-\theta) u_{\star\star}
\end{equation}
hold, together with the analog inequalities for player 2.

In phase 2.2, the (conditional) optimal payoff of player 1 is $u_\star$ if $\mu_1=\square$, and does not exceed $u_{\star\star}$ if
$\mu_1\neq \square$.
The first inequality ensures that the probability $1-\zeta$ of \emph{not} disclosing
information in phase 2.1 ($\mu_1=\square$) is so high that the expectation of the optimal payoff given $\mu_1$ does not exceed $\gamma^1$. That is,
additional information must be disclosed in phase 3 in order to implement $\gamma$. This inequality, together with (\ref{cond2}), will allow us
to adjust other parameter values in a way that the overall payoff is $\gamma$.
The second inequality in (\ref{cond3}) does not play a critical role.

\bigskip
We now choose the value of $\psi^2\in (0,1)$ small enough so that, for every $l_S\in L_S,m_S\in M_S$,
\begin{equation}\label{cond4}
(1-\theta)\E[u_\star(p(\cdot|\textbf{l}_S,\mu_1,\mu_2))|\textbf{l}_S=l_S,\mu_1=m_S]>u_\star(p(\cdot|\textbf{l}_S=l_S,
\mu_1=m_S)).
\end{equation}
In this expression, $p(\cdot|\textbf{l}_S,\mu_1,\mu_2))$ is the belief held by player 1 at the beginning of phase 4, after having received the two
messages $\mu_1,\mu_2$ of player 2. The left-hand side of (\ref{cond4}) is continuous w.r.t. $\psi^2$.
For $\psi^2=0$, $\mu_2$ is equal to $\textbf{m}_S$ with probability 1, and (\ref{cond4}) therefore holds by (\ref{cond1}). Hence (\ref{cond4}) holds for $\psi^2>0$
small enough.

Observe that all parameters values $\zeta, \theta,\psi^2$ are independent of the discount factor. The last parameter, $\psi^2_{\square}$ is chosen such that
 the expected payoff of player 1 is $\gamma^1$.
 We first argue that for a given $\psi^2_{\square}$, the limit discounted payoff of player 1, as $\delta \to 1$, is equal to%
\footnote{We here abuse notation, since $N\to +\infty$ as $\delta\to 1$. However, the limit of $\E[u_\star(\textbf{p}_N)]$ is well-defined.}
 \begin{equation}\label{cond5}
 \theta\E[u(p(\cdot|\textbf{l}_S,\mu_1),x^\star_{\textbf{l}_S,\mu_1})]+(1-\theta) \E[u_\star(\textbf{p}_N)].
 \end{equation}

Here is why. The contribution of Phases 1, 2.1 and 3 vanishes, as the length of these phases is fixed
 independently of $\delta$. The expected
 payoff in phase 2.2 converges\footnote{Because the approximation of $x^\star$ by $x_\delta(\vec{a}(x^\star))$ becomes perfectly accurate
 as $\delta \to 1$.}
 to $\E[u(p(\cdot|\textbf{l}_S,\mu_1),x^\star_{\textbf{l}_S,\mu_1})]$. Finally, for a fixed $\delta$, the expected continuation payoff
 from stage $N$ is equal to $\E[u_\star(\textbf{p}_N)+c_1(\textbf{h}_N)]$. As Lemma \ref{lemm_cost} will show,
 $\E[c_1(\textbf{h}_N)]$ will converge to 0.

 Observe that for $\psi^2_{\square}=0$, and following $\mu_1=\square$, the message $\mu_2$ of player 2 is non-informative. Thus,
 conditional on the event that
 $\mu_1=\square$, player 2 does not disclose information prior to phase 4.
Thus, for $\psi^2_{\square}=0$, the left-hand side of (\ref{cond5}) does not exceed $(1-\zeta)u_\star +\zeta u_{\star\star}$ which by (\ref{cond3}) is less than
$\gamma^1$. If $\psi^2_{\square}=1$, following $\mu_1=\square$ the message $\mu_2$ is fully informative, and the
left-hand side of (\ref{cond5}) is at least equal to $\zeta u_\star +(1-\zeta)\left(\theta u_\star +(1-\theta)u_{\star\star}\right)$, which
exceeds $\gamma^1$ by (\ref{cond3}). It follows that for $\delta$ high enough, say $\delta \geq \bar\delta _1$, there exists
$\psi^2_{\square}(\delta)\in (0,1)$, such that the discounted payoff of player 1 is equal to $\gamma^1$, and such that
$\psi^2_{\square}(1):=\lim_{\delta\to 1}\psi^2_{\square}(\delta)\in (0,1)$.

\bigskip
We conclude this section by discussing how high should $\delta$ be, for the profile $(\sigma_\delta,\tau_\delta)$ to be well-defined, and by discussing
beliefs and actions off-equilibrium.

We first argue that the costs $c_1(h)$ and $c_2(h)$ are small.
\begin{lemma}\label{lemm_cost} There is $c>0$ such that for every $\delta\geq \bar\delta_1$ and every
$h\in H_N$, one has
\[c_1(h)\leq (1-\delta)c.\]
\end{lemma}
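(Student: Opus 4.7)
The plan is a pure bookkeeping argument: $c_1(h)$ is the discounted sum of per-stage costs over only the $K_1+2K_2$ stages of Phases 1, 2.1 and 3, each such cost is uniformly bounded by $1$, and the normalizing factor $\delta^{-N}$ is bounded above uniformly on $\{\delta\geq \bar\delta_1\}$. First I would bound the raw sum by $(K_1+2K_2)(1-\delta)$, and then control $\delta^{-N}$ using that the length $\nu$ of Phase 2.2 was calibrated so that $\delta^{\nu}$ stays bounded away from $0$ as $\delta\to 1$.

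More precisely, because all payoffs lie in $[0,1]$ one has $c(p_n(h),a_n(h))=u_\star(p_n(h))-u(p_n(h),a_n(h))\in[0,1]$ for every stage $n$ and every history $h$. Moreover each discount weight satisfies $(1-\delta)\delta^{n-1}\leq 1-\delta$. Since the sum defining $c_1(h)$ is taken over exactly $K_1+2K_2$ stages---a constant depending only on $|L_S|,|L_T|,|M_S|,|M_T|$, and independent of $\delta$---one obtains immediately
\[
\sum_n (1-\delta)\delta^{n-1}\, c(p_n(h),a_n(h))\;\leq\; (K_1+2K_2)(1-\delta).
\]

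Next I would control the prefactor $\delta^{-N}$, where $N=K_1+2K_2+\nu+1$ and $\nu=\lfloor\ln(1-\theta)/\ln\delta\rfloor$. By the definition of the floor, $\nu\ln\delta\geq \ln(1-\theta)$, hence $\delta^{\nu}\geq 1-\theta$, i.e.\ $\delta^{-\nu}\leq 1/(1-\theta)$. For $\delta\geq\bar\delta_1$ one also has $\delta^{-(K_1+2K_2+1)}\leq \bar\delta_1^{-(K_1+2K_2+1)}$, so
\[
\delta^{-N}\;\leq\;\frac{\bar\delta_1^{-(K_1+2K_2+1)}}{1-\theta}.
\]
Combining the two bounds gives $c_1(h)\leq (1-\delta)\,c$ with $c:=\bar\delta_1^{-(K_1+2K_2+1)}(K_1+2K_2)/(1-\theta)$, which is a finite constant because $\theta\in(0,1)$ was fixed independently of $\delta$ when we set up the profile. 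The argument for $c_2(h)$ is identical by symmetry.

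There is no genuine obstacle here; the lemma is really a sanity check confirming what was announced informally in the construction, namely that the ``bonus'' used to compensate past deviations from myopic play in Phase 4 is of order $1-\delta$. The only point worth flagging is that the bound crucially uses the $\delta$-independence of $\theta$ (and hence of the length of Phases 1, 2.1 and 3 relative to the discount horizon): it is precisely this independence that keeps $1/(1-\theta)$ and $\bar\delta_1^{-(K_1+2K_2+1)}$ true constants rather than quantities blowing up with $\delta$.
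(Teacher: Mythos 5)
Your proof is correct and follows essentially the same route as the paper's: bound each per-stage cost by $1$ using that payoffs lie in $[0,1]$, observe that only the $K_1+2K_2$ stages of Phases 1, 2.1 and 3 contribute, and control $\delta^{-N}$ via $\delta^{-\nu}\leq 1/(1-\theta)$ from the definition of $\nu=\lfloor \ln(1-\theta)/\ln\delta\rfloor$. If anything, your write-up is slightly cleaner: the paper's displayed chain ends with the constant $1/\ln(1-\theta)$, which is a typo for $1/(1-\theta)$, whereas your bound $\delta^{\nu}\geq 1-\theta$ is stated correctly.
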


\begin{proof}
Because payoffs are bounded by 1, one has
\begin{eqnarray*}c_1(h)&\leq &(K_1+2K_2)(1-\delta)\delta^{-N}
= (K_1+2K_2)\frac{(1-\delta)}{\delta^{K_1+2K_2}}\delta^{-\lfloor \frac{\ln(1-\theta)}{\ln\delta}}\rfloor\\
 &\leq&  (K_1+2K_2)\frac{(1-\delta)}{\delta^{K_1+2K_2+1}}\delta^{\frac{\ln(1-\theta)}{\ln\delta}}
=  (K_1+2K_2)\frac{(1-\delta)}{\delta^{K_1+2K_2+1}}\frac{1}{\ln (1-\theta)},
\end{eqnarray*}
and the result follows.
\end{proof}
\bigskip

For $\delta\leq 1$ (including $\delta=1$), denote by $\calP(\delta)$ the support of $\textbf{p}_N$ when $\psi^2_{\square}$ is set to
$\psi^2_{\square}(\delta)$,
and define $\calQ(\delta)$ in a symmetric way.
Since $\pi_2$ and $\ep_2(\cdot|m_S)$ have full support, and since $\psi^2,\psi^2_{\square}(1)\in (0,1)$,
one has $\textbf{p}_N\in
\overset{\circ}{\Delta}  ^\dag_{\textbf{l}_S}(S\times M_S)$, with probability 1.

Because $\calP(1)$ and $\calQ(1)$ are finite sets, and by Proposition \ref{prop_1}, there is $\bar\delta_2<1$, $\ep>0$, and neighborhoods
$V(p)$ of $p\in \calP$, $V(q)$ of $q\in \calQ$, such that any payoff in
$[u_\star(p'),u_\star(p')+\ep]\times
[v_\star(q'),v_\star(q')+\ep]$ is a sequential equilibrium of $\Gamma(p',q')$, for every $p\in \calP,p'\in V(p)$, and $q\in \calQ,q'\in V(q)$.

In addition, we choose the neighborhoods $V(p)$, $V(q)$ to be small enough, and $C>0$ so that the conclusion of Proposition \ref{prop_2} holds for
every $p\in \calP,p'\in V(p)$, and $q\in \calQ,q'\in V(q)$.

We choose $\bar\delta_3<1$ to be high enough so that the following conditions are met for each $\delta\geq \bar\delta_3$: (i)
every  $p'\in \calP(\delta)$
belongs to  $V(p)$ for some $p\in \calP$; (ii) $(1-\delta)c\leq \ep$.

For $\delta\geq \bar\delta_3$, the profile $(\sigma_\delta,\tau_\delta)$ is then well-defined,
at any information set that is not ruled
out by the definition of $(\sigma_\delta,\tau_\delta)$ at earlier stages.

\bigskip

Consider now an information set $I^1_{l,h}$ that is reached with probability 0,
and assume that the information set $I^1_{l,h'}$
is reached with positive probability, where $h'$ is the longest prefix of $h$.

If the sequence $h$ of actions has probability zero,
then we let beliefs at $I^1_{l,h}$ and at all subsequent information
sets coincide with the belief held at $I^1_{l,h'}$. Player 1 repeats the action that is optimal at $I^1_{l,h'}$.

Assume now that the sequence $h$ has positive probability. This
corresponds to the case where player 1 misreported in Phase 1, and
played consistently with his report afterwards. Then the belief of
player 1 at $I^1_{l,h}$ is well-defined by Bayes' rule (and is
independent of player 1's strategy), and only assigns a positive
probability to information sets $I^2_{m,h}$ that are reached with
positive probability under $\tau_\star$. We let $\sigma_\star $
play at $I^1_{l,h}$ a best reply to $\tau_\star$.

By construction, sequential
rationality holds at any information set $I^1_{l,h}$ that is reached with probability zero.
One can
verify that beliefs are consistent
with $(\sigma_\star,\tau_\star)$. We omit the proof.

\subsection{Equilibrium properties}

We claim that the profile $(\sigma_\delta,\tau_\delta)$ is a sequential equilibrium profile for $\delta<1 $ high enough.

Let $\eta>0$ be small enough so that
\[\E_{\ep_2,x^\star}[l_S\to k]<\E_{\ep_2,x^\star}[l_S\to l_S]-2\eta\mbox{ for every }l_S, k\in L_S,l_S\neq k,\]
and we choose $\bar\delta_4<1$ such that
\[\E_{\ep_2,x_\delta(\vec{a}(x^\star))}[l_S\to k]<\E_{\ep_2,x_\delta(\vec{a}(x^\star))}[l_S\to l_S]-\eta\mbox{ for every }l_S, k\in L_S,l_S\neq k\mbox{ and }\delta\geq \bar{\delta}_4.
\]
We finally choose $\delta_5<1$ to be such that
$1-\delta^{K_1+2K_2}+(1-\delta)C<\eta\delta^{K_1+2K_2}$ for each $\delta\geq \bar{\delta}_5$.

We now verify that $(\sigma_\star,\tau_\star)$ is a sequential equilibrium, as soon as $\delta\geq\max\{\bar\delta_4,\bar\delta_5\}$. It is sufficient
to check that sequential rationality holds at any information set that is reached with positive probability.
Let such an information set $I_{l,h}$ be given,  and let $n$ be the stage to which  $I_{l,h}$ belongs.
If stage $n$ belongs to phase 4, then sequential rationality at $I_{l,h}$ follows because continuation strategies in phase 4 form a sequential equilibrium
of the associated self-ignorant game.
Assume then that $n < N$.

We will make use of the following observation that holds because $\ep_1(\cdot)$, $\ep_2(\cdot)$, $\pi^1$ and $\pi^2$ have full support: if $I_{l_S,l_T,h}$ is reached with positive probability, then the set of actions
that are played with positive probability at $I_{l_S,l_T,h}$ does not depend on $l_T$, and, therefore,
the information set $I_{l_S,l'_T,h}$ is also reached with positive probability, for every $l'_S\in L_S$.
We note that the compensation made in phase 4 implies that player 1 is indifferent at $I_{l_S,l_T,h}$ between all actions that are played with positive probability.
One thus simply needs to check that player 1 cannot increase his continuation payoff by playing some other action, $a$.

Assume first that $n$ belongs to either phase 2.1, 2.2 or to phase 3. In that case, the set of actions that are played at $I_{l,h}$ does
not depend on $l$. Hence, when playing $a$, player 1 triggers a myopic play by player 2, and player 1's overall payoff in that case does not
exceed
\[(1-\delta)u_\star(p_n)+\delta \E[u_\star(\textbf{p}_{n+1})|l,h].\]
On the other hand, the expected continuation payoff of player 1 at $I_{l,h}$ is at least $\delta^N\E[u_\star(\textbf{p} _N)|l,h]$. Sequential rationality
then follow from the choice of parameters.

Assume finally that stage $n$ belongs to phase 1. Again, it is not profitable to
switch to an action that triggers a myopic play from player 2. What if player 1, instead of reporting $l_S$,
chooses to report $k\neq l_S$ ?
Then, as above, the choice of parameters ensures that it is optimal for player 1 to play consistently with $k$,
at least until phase 4. Such a deviation yields a payoff (discounted back to $h$) of at most
\[\delta^{-n}\left(\delta^{K_1+K_2}\E_{\ep_2,x_\delta(\vec{a}(x^\star))}[l_S\to k]+(1-\delta)\left(1+\cdots +\delta^{K_1+2K_2-1}\right)
+\delta^N\E[u_\star(\textbf{p}_N)+(1-\delta)C]\right).\]
On the other hand, player 1's continuation payoff when reporting truthfully is at least
\[\delta^{-n}\left(\delta^{K_1+K_2}\E_{\ep_2,x_\delta(\vec{a}(x^\star))}[l_S\to l_S]+\delta^N\E[u_\star(\textbf{p}_N)+(1-\delta)C]\right).\]
We stress that the distribution of $\textbf{p}_N$ is the same in both expressions, because the distribution of $(\mu_1,\mu_2)$
does not depend on player 1's report. The result follows, by the choice of $\bar\delta_4$ and $\bar\delta_5$.

\begin{thebibliography}{99}                                                                                               %


\bibitem{AP} Admati, A.R. and M. Perry (1987)
     Strategic Delay in Bargaining,
     \emph{The Review of Economic Studies},
     \textbf{54},
     345--364.


\bibitem{AH1} Aumann, R.J. and S. Hart (1992)
\emph{Handbook of Game Theory with Economic Applications, vol. 1,}
    Handbook of Game Theory with Economic Applications,
    Elsevier Science Publishers, North-Holland.

\bibitem{AH} Aumann, R.J. and S. Hart (2003)
     Long Cheap Talk,
     \emph{Econometrica},
     \textbf{71},
     1619--1660.

\bibitem {AM 1967} Aumann, R.J. and M.B. Maschler (1966)
Game Theoretic Aspects of Gradual Disarmament,
Chapter V in Report to the US  Arms Central and Disarmament Agency, Washington, DC.

\bibitem {AM 1995} Aumann, R.J. and M.B. Maschler (1995)
    \textit{Repeated Games with Incomplete Information},
    The MIT Press.

    \bibitem{Bl} Blackwell, D. (1953), ``Equivalent Comparison of
Experiments," \emph{Ann. Math. Stat.},\textbf{\ 24}, 265-272.


\bibitem{CheSak} Che, Y.-K. and J. Sakovics (2004)
     A Dynamic Theory of Holdup,
     \emph{Econometrica},
     \textbf{72},
     1063--1103.


\bibitem{CJ} Compte, O. and P. Jehiel (2004)
     Gradualism in Bargaining and Contribution Games,
     \emph{The Review of Economic Studies},
     \textbf{71},
     975--1000.


\bibitem{CS} Crawford, V.P. and J. Sobel (1982)
     Strategic Information Transmission,
     \emph{Econometrica},
     \textbf{50},
     1431--1451.

\bibitem{CT} Cripps, M.W. and J.P. Thomas (2003)
    Some Asymptotic Results in Discounted Repeated Games of One-Sided Incomplete Information,
    \emph{Mathematics of Operations Research},
    \textbf{28},
    433--462.

\bibitem{F} Forges, F. (1990)
     Equilibria with Communication in a Job Market Example,
     \emph{The Quarterly Journal of Economics},
     \textbf{105},
      375--398.

\bibitem{F2} Forges, F. (1992) Repeated Games of Incomplete Information : Non-Zero-Sum,
    in \emph{Handbook of Game Theory with Economic Applications, vol. 1,} R. Aumann et S. Hart ,
    Handbook of Game Theory with Economic Applications,
    Elsevier Science Publishers, North-Holland,
    155--177.


\bibitem {Mail2}Mailath, G. and L. Samuelson (2006)
     \emph{Repeated Games and Reputations: Long-Run Relationships},
     Oxford University Press.

\bibitem{MM} Marx, L.M. and S.A. Matthews (2000)
     Dynamic Voluntary Contribution to a Public Project,
     \emph{The Review of Economic Studies},
     \textbf{67},
      327--358.

\bibitem{M} Mayberry, J.P. (1967) Discounted Repeated Games with Incomplete Information,
    in \emph{Report for the U.S. Arms Control and Disarmament Agency, ST-116},
    \textbf{Chapter V},
    435--461.

\bibitem{P} Peski, M. (2008) Repeated Games with Incomplete Information on One Side,
    \emph{Theoretical Economics},
    \textbf{3},
    29--84.

\bibitem{W} Wiseman, T. (2005)
     A Partial Folk Theorem for Games with Unknown Payoff Distributions,
     \emph{Econometrica},
     \textbf{73},
     629--645.

\bibitem{Z} Zamir, S. (1992)
    Repeated Games with Incomplete Information: Zero-Sum,
    in \emph{Handbook of Game Theory, Vol. 1}, (Chapter 5), Edited by R.J. Aumann and S. Hart,
    Elsevier Science Publishers B.V.,
    109--154.


\end{thebibliography}
\end{document}